\newtheorem{theorem}{Theorem}[section]
\newtheorem{lemma}[theorem]{Lemma}
\newtheorem{proposition}{Proposition}
\newtheorem{problem}{Problem}
\theoremstyle{definition}
\newtheorem{remark}{Remark}
\newcommand\RE{\mathbb{R}}
\renewcommand\div{\mathop{\rm{div}}\nolimits}
\newcommand\Grad{\mathop{\boldsymbol\nabla}\nolimits}
\newcommand\Grads{\mathop{\boldsymbol\nabla_{\rm sym}}\nolimits}
\newcommand\grad{\mathop\nabla\nolimits}
\newcommand\Huo{H^1_0(\Omega)}
\newcommand\Ldo{L^2_0(\Omega)}
\newcommand\B{\mathcal B}
\renewcommand\u{\mathbf{u}}
\renewcommand\v{\mathbf{v}}
\renewcommand\d{\mathbf{d}}
\renewcommand\c{\mathbf{c}}
\newcommand\x{\mathbf{x}}
\newcommand\s{\mathbf{s}}
\newcommand\X{\mathbf{X}}
\newcommand\Y{\mathbf{Y}}
\newcommand\FF{\mathbf{F}}
\newcommand\ssigma{\boldsymbol\sigma}
\newcommand\pphi{\boldsymbol\varphi}
\newcommand\zzeta{\boldsymbol\zeta}
\newcommand\cchi{\boldsymbol\chi}
\newcommand\llambda{\boldsymbol\lambda}
\newcommand\LLambda{\boldsymbol\Lambda}
\newcommand\mmu{\boldsymbol\mu}
\newcommand\dirac{\boldsymbol\delta}
\newcommand\F{\mathbb{F}}
\renewcommand\P{\mathbb{P}}
\newcommand\OT{\left]0,T\right[}
\newcommand\ds{\mathrm{d}\s}
\newcommand\dA{\mathrm{dA}}
\newcommand{\dr}{\delta\rho}
\newcommand\Hub{(H^1(\B))^d}
\newcommand\dt{\Delta t}
\newcommand\Vh{\mathbf{V}_h}
\newcommand\Qh{Q_h}
\newcommand\Sh{\mathbf{S}_h}
\newcommand\Lh{\LLambda_h}
\newcommand\lh{\llambda_h}
\newcommand\f{\mathbf{f}}
\newcommand\g{\mathbf{g}}
\newcommand\V{\mathbf{V}}
\newcommand\tria{\mathcal{T}_h}
\newcommand\triaB{\mathcal{T}_h^{\B}}
\newcommand\ucX{\u(\X(\cdot,t),t)}
\newcommand\uhcX{\u_h(\X_h(\cdot,t),t)}
\newcommand\uhcXn{\u_h^{n+1}(\X_h^n)}
\newcommand\vcX{\v(\X(t))}
\newcommand\vcXn{\v(\X_h^n)}
\newcommand{\jump}[1]  {[\![ #1 ]\!]}
\newcommand\PP{\mathbb{P}}
\newcommand\PPt{\tilde\PP}
\newcommand\Acca{H}
\newcommand\ibm{\textsc{ibm}\xspace}
\newcommand\feibm{\textsc{fe-ibm}\xspace}
\newcommand\dlmibm{\textsc{dlm-ibm}\xspace}
\newcommand\cfl{\textsc{cfl}\xspace}
\newcommand\N{\mathbf{N}}
\newcommand{\Eh}{\mathcal{E}_h}
\begin{document}
\title[Finite element immersed boundary method] 
      {Discrete models for fluid-structure interactions: the Finite Element
       Immersed Boundary Method}

\dedicatory{Dedicated to Paolo Secchi and Alberto Valli on the occasion of
their sixtieth birthdays}

\author{Daniele Boffi}
\address{Dipartimento di Matematica ``F. Casorati'', Universit\`a di Pavia,
Italy}
\email{daniele.boffi@unipv.it}
\urladdr{http://www-dimat.unipv.it/boffi/}

\author{Lucia Gastaldi}
\address{DICATAM, Universit\`a di Brescia, Italy}
\email{lucia.gastaldi@unibs.it}
\urladdr{http://www.ing.unibs.it/gastaldi/}

\subjclass{Primary: 65M60, 65M12; Secondary: 65M85.}
\keywords{Fluid-structure interactions, Finite elements, Immersed Boundary
Method, Lagrange multipliers.}

\thanks{The authors are supported by PRIN-MIUR grant and by
GNCS (Istituto Nazionale di Alta Matematica) grant}

\begin{abstract}
The aim of this paper is to provide a survey of the state of the art in the
finite element approach to the Immersed Boundary Method (\feibm) which has
been investigated by the authors during the last decade. In a unified setting,
we present the different formulation proposed in our research and highlight
the advantages of the one based on a distributed Lagrange multiplier
(\dlmibm) over the original \feibm.
\end{abstract}

\maketitle

\section{Introduction}
\label{se:intro}
In this paper we present in a unified setting some results which have been the
object of our research on the finite element discretization of the Immersed
Boundary Method (\ibm) during the last years. This is a survey paper which
aims at providing a comprehensive and self-contained presentation of this
subject.

The \ibm has been introduced by Peskin in the seventies~\cite{Pe1,Pe2} for
the numerical approximation of biological phenomena involving fluids and
solids (typically, the blood flow in the cardiac muscle). Since then, it has
been successfully adopted in many application areas; the interested reader
can refer
to~\cite{PeAN,Leveque1988191,Fauci198885,Zhu2002452,Givelberg2003377,
Givelberg20040283,Miller2005195,Kim20062294,Heys2008977,Kim2009927,
Griffith2012433} and the references therein.
One of the main features of the \ibm is that the solid is thought as a part of
the fluid and its effect on the dynamics of the system is modeled through a
Dirac delta functions that has the role of linking Eulerian and Lagrangian
variables. The original discretization of the \ibm, which makes use of finite
differences for the underlying fluid equations, needs a suitable approximation
of the delta function. This procedure is a key point of the numerical
strategy: tuning the approximation of the delta function is a crucial aspect
which has a great influence on how the discontinuities of the solution are
well captured.

In this framework it is natural to consider a finite element version of the
\ibm, where the delta function does not need any approximation, since it can
be handled directly by the variational formulation. The first steps in this
direction have been presented in~\cite{bathe}. We refer to this formulation as
\feibm~\cite{bgh1,bgh2,heltai,BGHP,BCG2011}. We recall that our formulation
can model systems where the solid and the fluid have the same dimension
(i.e., codimension zero case), or where the solid has codimension one.
There are also other finite element approaches to the \ibm: some of them
handle the delta function variationally as we
do~\cite{heltaicostanzo,hoppe1,hoppe2}, other ones use different techniques
based on its approximation~\cite{WangLiu,ZGWangLiu,LiuKimTang}.

During our studies concerning this topic, we have discussed two important
issues: the mass conservation of the overall procedure and the stability of
the fully discrete scheme.
Concerning the mass conservation, we have observed that, as expected,
discontinuous pressure schemes can enforce the mass conservation (expressed by
the fact that the fluid velocity is divergence free) much more locally than
continuous ones. In this respect, we have analyzed a modification of
continuous pressure schemes which improves their
performances~\cite{bcgg2012,bcggumi}. Several numerical tests confirm our
theoretical investigations~\cite{coupled2011}.
In our scheme, we use a semi-implicit time advancing scheme.
For its stability, we have observed that the semi-implicit
\feibm is subjected to a \cfl condition which depends on the parameters
involved with the model; in particular, it has been shown that the method is
\cfl-stable even when the densities of the fluid and the solid are comparable.
In this respect, the \feibm is superior to the more popular Arbitrary
Lagrangian Eulerian (\textsc{ale}) method which has been proved to be
unconditionally unstable unless a fully implicit approach is
considered~\cite{causin}.

Recently, we have studied a modification of the \feibm consisting in the
introduction of a Lagrange multiplier associated to the equation coupling
solid and fluid velocities. Since the resulting formulation resembles the
fictitious domain method with distributed Lagrange multiplier, we refer to the
new scheme as \dlmibm (see~\cite{glokuz2007,girglo1995,girglopan}).
First promising numerical results have been presented
in~\cite{coupled2013}, showing that the (semi-implicit) scheme is
unconditionally stable and (surprisingly) enjoys better conservation
properties than the previous one.
In~\cite{nuovodlm} we performed a stability analysis for the \dlmibm, showing
rigorously that the semi-implicit scheme is unconditionally stable, and
reporting on some additional numerical tests.

The structure of the paper is as follows: in Section~\ref{se:ibm} we recall
the fundamental properties of the original \ibm; in Section~\ref{se:fe-ibm} we
describe the \feibm and the variational treatment of the Dirac delta function;
in Section~\ref{se:DLM} we show how to introduce the Lagrange multiplier which
has the effect of stabilizing the time advancing scheme; in
Section~\ref{se:stab} we discuss the stability of the proposed scheme and
in Section~\ref{se:num} we report some numerical experiments.

\section{The Immersed Boundary Method}
\label{se:ibm}
In this section we review the formulation of the \ibm in
its original version introduced by Peskin in~\cite{PeAN} for immersed
materials modeled as collections of fibers  and further extended 
to cover the case of \emph{thick} bodies modeled as a hyperelastic material
in~\cite{BGHP}. One of the main difficulties to face when
treating fluid-structure interaction problems consists in the fact that the
fluid is naturally modeled using Eulerian variables, while for the solid the
Lagrangian framework is more appropriate. The \ibm is a way to
mix Eulerian and Lagrangian variables thanks to the use of the Dirac delta
function. The main idea in \ibm is to consider the structure as a part of the
fluid where additional mass and forces are concentrated.

Let $\Omega\subseteq\RE^d$, $d=2,3$ be a region containing both a viscous
incompressible fluid and an immersed elastic structure. We introduce
the Navier--Stokes equations describing the dynamics of the fluid with respect
to the Eulerian variable denoted by $\x$:
\begin{equation}
\label{eq:NS}
\aligned
&\rho\left(\frac{\partial\u}{\partial t}+\u\cdot\Grad\u\right)
-\nu\Delta\u+\Grad p=\FF&&\text{ in }\Omega\times]0,T[\\
&\div\u=0&&\text{ in }\Omega\times]0,T[.
\endaligned
\end{equation}
Here $\rho$ and $\nu$ are positive constants denoting the density and the
viscosity of the fluid. The unknowns $\u$ and $p$ represent the velocity and the
pressure of the fluid, respectively. The right hand side $\FF$ stands for the
forces acting on the fluid, and, in absence of external volume forces, it takes
into account the so called \emph{fluid-structure interaction forces}, that is
the forces exerted by the elastic structure on the fluid.

The immersed structure is considered as an elastic incompressible material
filling at time $t$ a region $\B_t\subseteq\Omega$ of codimension one or zero.
Using Lagrangian variables, $\B_t$ can be represented as the image of a mapping 
$\X$ from a reference domain $\B\subset\RE^m$, with $m=d$ or $m=d-1$.
We denote by $\s$ the Lagrangian coordinates
in the reference domain $\B$, then $\X(\s,t)$ represents the position of a
point in the current solid domain $\B_t$ which is labeled $\s$ in the
reference domain, that is:
\[
\aligned
&\X(\cdot,t):\B\to\B_t\\
&\x\in\B_t \iff \x=\X(\s,t)\quad\text{for some }\s\in\B.
\endaligned
\]
Since $\u(\x,t)$ represents the velocity of a material point at position 
$\x$ at time $t$, we have the following condition
\begin{equation}
\label{eq:velocity}
\u(\x,t)=\frac{\partial\X}{\partial t}(\s,t)\quad\text{for }\x=\X(\s,t).
\end{equation}
The source term in the first equation of~\eqref{eq:NS} can now be written 
in absence of external volume forces as follows:
\begin{equation}
\FF(\x,t)=\int_{\B}\f(\s,t)\delta(\x-\X(\s,t))\,d\s\quad
\text{in }\Omega\times]0,T[,
\label{eq:sourcedelta}
\end{equation}
where $\f(\s,t)$ is the force density that the immersed material exerts to the
fluid and that can be modeled in different ways depending on the application
field. In order to give an idea, we report here the most simple example of an
elastic structure represented by a massless closed curve $\Gamma_t$ immersed in
the fluid occupying a two dimensional domain $\Omega$. The curve is given in
parametric form as $\X(\s,t)$ for $0\le\s\le L$, with $\X(0,t)=\X(L,t)$. The
local density force applied by the curve to the fluid is given by
$\f=\partial(T\tau)\big/\partial\s$, where $T$ is the tension and $\tau$ is the
unit tangent to the curve.
Assuming that $T$ depends linearly on $|\partial\X/\partial\s|$ we obtain 
\begin{equation}
\f(\s,t)=\kappa\frac{\partial^2\X}{\partial\s^2}.
\label{eq:forzaelastica}
\end{equation}
To summarize, the \ibm formulation for
fluid-structure interaction problems has the following form.
\begin{problem}
\label{pb:peskin}
Find $\u:\Omega\times]0,T[\to\RE^d$, $p:\Omega\times]0,T[\to\RE$,
$\X:\B\times]0,T[\to\Omega$ which satisfy:
\begin{alignat}{2}
&\rho\left(\frac{\partial\u}{\partial t}+\u\cdot\grad\u\right)-\nu\Delta\u
+\grad p=\FF&&\quad
\mbox{in }\Omega\times]0,T[
\label{eq:NavierStokes}\\
&\div\u=0&&\quad
\mbox{in }\Omega\times]0,T[
\label{eq:NavierStokesdiv}\\
&\FF(\x,t)=\int_{\B}\f(\s,t)\delta(\x-\X(\s,t))d\s
&&\quad\mbox{in }\Omega\times ]0,T[
\label{eq:forza}\\
&\frac{\partial\X}{\partial t}=\u(\X(\s,t),t)
&&\quad\mbox{in }\Omega_0\times]0,T[
\label{eq:noslip}\\
&\u(\x,t)=0&& \quad\mbox{on }\partial\Omega\times]0,T[
\label{eq:bcu}\\
&\u(\x,0)=\u_0(\x)&&\quad\mbox{in }\Omega
\label{eq:initu}\\
&\X(\s,0)=\X_0(\s)&&\quad\mbox{in }\B.
\label{eq:initX}
\end{alignat}
\end{problem}
The above formulation has been derived in~\cite{PeAN} by using the
principle of least action and it is well suited to the case of a structure
described by a system of elastic fibers.
In order to treat more general elasticity models for thick structure, it
has been observed in~\cite{BGHP} that an additional transmission condition
along the interface between the immersed body and the surrounding fluid is
needed. We give here the formulation of \ibm obtained in~\cite{BCG2011} extending
the formulation of~\cite{BGHP} to the case of fluid and solid with different
densities. 

The strong form of the equation of motion can be written as follows
\begin{equation}
\label{eq:motion}
\rho\dot\u=\rho\left(\frac{\partial \u}{\partial t}+\u\cdot\Grad\u\right)
=\div\ssigma
\quad \text{in }\Omega,
\end{equation}
where $\ssigma$ stands for the Cauchy stress tensor. 

Let us consider first the case $m=d$ and let us introduce some assumption
describing the characteristics of fluid and solid materials.
In the fluid, $\ssigma$ is modeled by means of the Navier--Stokes stress tensor
$\ssigma_f=-p{\mathbb I}+\nu(\Grad\u+(\Grad\u)^\top)$. 
We assume that the solid is composed by a viscous hyperelastic
material, therefore $\ssigma$ can be decomposed as the sum of the viscous part
$\ssigma_f$ and an elastic part $\ssigma_s$, which takes into account the
elastic behavior of the material.
Hence, the Cauchy stress tensor can be written as follows
\begin{equation}
\label{eq:stress-decomposition}
\ssigma = \left\{
\begin{array}{ll}
\ssigma_f & \text{ in }\quad \Omega \backslash \B_t \\
\ssigma_f + \ssigma_s \quad& \text{ in } \quad \B_t.
\end{array}
\right.
\end{equation}
Since in the description of the deformation of an elastic body the Lagrangian
setting is more convenient, we express $\ssigma_s$ in Lagrangian variables:
this can be done by introducing the first Piola--Kirchhoff stress tensor
$\PPt$ defined in such a way that, for any arbitrary smooth portion
$\mathcal{P}$ of $\B$ evolving as $\mathcal{P}_t = \X(\mathcal{P},t)$, it
holds
\begin{equation}
\label{eq:pk-definition}
\int_{\partial\mathcal{P}_t}\ssigma_s\mathbf{n} \mathrm{da}=
\int_{\partial\mathcal{P}}\PPt\mathbf{N}\,\mathrm{dA}
\quad\text{for all }\mathcal{P}_t;
\end{equation}
where $\mathbf{N}$ is the outer normal to the region $\mathcal{P}$ in the
Lagrangian coordinates. The first Piola--Kirchhoff stress tensor gives the
\emph{elastic} force per unit reference volume ($d=3$) or area
($d=2$), expressed in the reference space, and its pointwise
expression is given by
\begin{equation}
\label{eq:pk-point-wise}
\PPt(\s,t) = |\FF(\s,t)| \, \ssigma_s(\X(\s,t),t) \, \FF^{-\top}(\s,t).
\end{equation}
Moreover, the densities of the fluid and the solid could be different so that 
we set
\begin{equation}
\label{eq:density}
\rho=\left\{
\begin{array}{ll}
\rho_f & \text{in }\Omega\setminus\B_t\\
\rho_s & \text{in }\B_t. 
\end{array}
\right.
\end{equation}

The case of the immersed body occupying a region of codimension one represents
a mathematical simplification of a thin body with thickness $t_s$ very small
with respect to the other space dimensions, so that one can assume that the
physical quantities depend only on the variables along the middle section of
the body represented by $\B_t$ and are constant in the orthogonal direction.
In this case the thickness $t_s$ appears as a multiplicative factor in the
expressions of the Piola--Kirchhoff stress tensor and 
of the density of the solid (see~\cite{BCG2011} for the details). 
In order to unify the formulation of the problem we set
\begin{equation}
\label{eq:unification}
\dr=\left\{\begin{array}{ll}
\rho_s-\rho_f &\text{if }\dim{\B_t}=d\\
(\rho_s-\rho_f)t_s&\text{if }\dim{\B_t}=d-1
\end{array}
\right.
\quad
\PP=\left\{\begin{array}{ll}
\PPt &\text{if }\dim{\B_t}=d\\
t_s\PPt&\text{if }\dim{\B_t}=d-1.
\end{array}
\right.
\end{equation}
Using the above definitions~\eqref{eq:pk-definition}, \eqref{eq:density}
and~\eqref{eq:unification}
in~\eqref{eq:motion}, the principle of virtual work provides with some
computations the following strong form of the problem.
\begin{problem}
Find $\u:\Omega\times\OT\to\RE^d$, $p:\Omega\times\OT\to\RE$ and
$\X:\B\times\OT\to\Omega$ which satisfy:
\begin{equation}
\aligned
 &\rho_f\left(\frac{\partial\u}{\partial t}+\u\cdot\grad\u\right)
   -\nu\Delta\u
   +\grad p =\d+\g+\mathbf{t} &&\quad \mbox{in }\Omega\times]0,T[
      \\
 &\div\u=0&&\quad \mbox{in }\Omega\times]0,T[
     \\
 &\d(\x,t)=
   -\dr\int_{\B}\frac{\partial^2\X}{\partial t^2}
   \dirac(\x-\X(\s,t))\ds
   &&\quad\mbox{in }\Omega\times ]0,T[
    \\
 &\g(\x,t)=\int_{\B}\Grad_s\cdot\PP\dirac(\x-\X(\s,t))\ds
   &&\quad\mbox{in }\Omega\times ]0,T[
   \\
 &\mathbf{t}(\x,t)=-\int_{\partial \B}\PP\mathbf{N}\dirac(\x-\X(\s,t))\dA
   &&\quad\mbox{in }\Omega\times ]0,T[
\\
 &\frac{\partial\X}{\partial t}(\s,t)=\u(\X(\s,t),t)
   &&\quad\mbox{in }\B\times]0,T[
  \\
 &\u(\x,t)=0&& \quad\mbox{on }\partial\Omega\times]0,T[
 \\
 &\u(\x,0)=\u_0(\x)&&\quad\mbox{in }\Omega
\\
 &\X(\s,0)=\X_0(\s)&&\quad\mbox{in }\B.
\endaligned
\end{equation}
\label{pb:cont-original}
\end{problem}

\section{The finite element Immersed Boundary Method}
\label{se:fe-ibm}

In this section we review the history of our approach to the finite element
Immersed Boundary Method \feibm.

The starting point of our analysis has been introduced in~\cite{bathe} (see
also~\cite{bgh1}).
The main idea is that the source term which represents the effects of the
structure on the fluid and which involves the presence of a Dirac delta
function, can be naturally written in variational form. This leads to a
variational formulation of the Immersed Boundary Method which can be used
efficiently for the finite element discretization. In order to describe this
formulation, let us consider the previously introduced source term defined in
terms of the Dirac delta function (see~\eqref{eq:sourcedelta}):
\begin{equation}
\FF(\x,t)=\int_{\B}\f(\s,t)\delta(\x-\X(\s,t))\,d\s\quad
\text{in }\Omega\times]0,T[,
\label{eq:forzadelta}
\end{equation}
where the function $\f$ is related to elastic properties of the solid
expressed in the Lagrangian variable $\s$. The following Lemma shows that
$\FF$ can actually be interpreted as an element of $H^{-1}(\Omega)$.

\begin{lemma}[see~\cite{bathe,bgh1}]

Let $\B_t$ be Lipschitz continuous for all $t\in[0,T]$ and suppose that $\f$
belongs to $L^2(\B)$ for all $t\in]0,T[)$.
Then $\FF$ (see~\eqref{eq:forzadelta}) is a
distribution belonging to $H^{-1}(\Omega)$ defined as
\[
{}_{H^{-1}(\Omega)}\langle\FF(t),\v\rangle_{H^1_0(\Omega)}=
\int_{\B}\f(\s,t)\cdot\v(\X(\s,t))\,d\s\quad\forall t\in]0,T[,\
\forall\v\in(H^1_0(\Omega))^d.
\]

\end{lemma}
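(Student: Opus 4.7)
The plan is to interpret the formal integral by applying the defining action of the Dirac mass against a test function $\v \in (H^1_0(\Omega))^d$, and then to verify that the resulting linear functional is bounded on $(H^1_0(\Omega))^d$. Formally, by Fubini and the sifting property of $\delta$,
\[
\int_\Omega \FF(\x,t)\cdot\v(\x)\,\dx
= \int_\Omega\!\int_{\B}\f(\s,t)\cdot\v(\x)\,\delta(\x-\X(\s,t))\,\ds\,\dx
= \int_{\B}\f(\s,t)\cdot\v(\X(\s,t))\,\ds,
\]
so I would simply \emph{take} the right-hand side as the definition of the pairing and check it is well-posed and continuous.

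The well-posedness reduces to giving meaning to $\v(\X(\cdot,t))$ and bounding its $L^2(\B)$-norm in terms of $\|\v\|_{H^1_0(\Omega)}$. By Cauchy--Schwarz,
\[
\left|\int_{\B}\f(\s,t)\cdot\v(\X(\s,t))\,\ds\right|
\le \|\f(\cdot,t)\|_{L^2(\B)}\,\|\v\circ\X(\cdot,t)\|_{L^2(\B)},
\]
so the task is to estimate $\|\v\circ\X(\cdot,t)\|_{L^2(\B)}$. I would split into the two geometric cases mentioned in the paper. In the codimension-zero case ($m=d$), $\X(\cdot,t)$ is a Lipschitz map from $\B$ onto $\B_t\subset\Omega$, and a change of variables gives
\[
\int_{\B}|\v(\X(\s,t))|^2\,\ds \le C_t \int_{\B_t}|\v(\x)|^2\,\dx \le C_t\|\v\|_{L^2(\Omega)}^2,
\]
with $C_t$ depending on the (essentially bounded) inverse Jacobian; Poincaré then yields the bound by $\|\v\|_{H^1_0(\Omega)}$. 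In the codimension-one case ($m=d-1$), $\B_t$ is a Lipschitz $(d-1)$-submanifold of $\Omega$, and the trace theorem provides
\[
\|\v\|_{L^2(\B_t)}\le C\,\|\v\|_{H^1(\Omega)},
\]
after which the same change-of-variables on the manifold controls $\|\v\circ\X(\cdot,t)\|_{L^2(\B)}$ by $\|\v\|_{L^2(\B_t)}$.

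The main obstacle, and the reason the hypothesis of Lipschitz regularity of $\B_t$ is imposed, is precisely the codimension-one case: one needs enough regularity of $\partial\B_t$ (equivalently of $\X(\cdot,t)$) both to apply the trace theorem and to legitimately perform the change of variables with bounded surface Jacobian. Once these ingredients are in place, linearity is obvious and the combined estimate
\[
\left|{}_{H^{-1}(\Omega)}\langle\FF(t),\v\rangle_{H^1_0(\Omega)}\right|
\le C(t)\,\|\f(\cdot,t)\|_{L^2(\B)}\,\|\v\|_{H^1_0(\Omega)}
\]
shows that $\FF(t)\in H^{-1}(\Omega)$ with the stated pairing, completing the proof.
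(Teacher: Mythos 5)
The paper itself states this lemma without proof, referring to \cite{bathe,bgh1}, and your argument is precisely the one used there: take the formal sifting identity as the \emph{definition} of the pairing, apply Cauchy--Schwarz, and control $\|\v\circ\X(\cdot,t)\|_{L^2(\B)}$ by $\|\v\|_{H^1_0(\Omega)}$ via a change of variables in the codimension-zero case and the trace theorem on the Lipschitz interface in the codimension-one case. The only point worth making explicit is that the change of variables requires the (surface) Jacobian of $\X(\cdot,t)$ to be bounded \emph{below}, i.e.\ $\X(\cdot,t)$ bi-Lipschitz onto its image, which is what the hypothesis ``$\B_t$ Lipschitz'' is meant to encode; with that understood, your proof is correct and matches the cited one.
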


This observation made it possible to introduce the following variational
formulation for the first \feibm approach. We start describing this model in
the case of a constant density $\rho$ and when the elastic force can be
modeled as
\[
\f(\s,t)=\kappa\frac{\partial^2\X}{\partial\s^2}(\s,t),
\]
$\kappa$ being the elastic constant of the solid
(see~\eqref{eq:forzaelastica}).

\begin{problem}

Given $\u_0\in(H^1_0(\Omega))^d$ and $\X_0:\B\to\Omega$, for almost every
$t\in]0,T[$ find $(\u(t),p(t))\in(H^1_0(\Omega))^d\times L^2_0(\Omega)$ and
$\X:\B\to\Omega$ such that
\begin{equation}
\aligned
&\rho\left(\frac{d}{dt}(\u(t),\v)+(\u(t)\cdot\Grad\u(t),\v)\right)&&\\
&\qquad+\nu(\Grads\u(t),\Grads\v)-(\div\v,p(t))=\langle(\FF(t),\v\rangle\quad&&
\forall\v\in(H^1_0(\Omega))^d\\
&(\div\u(t),q)=0&&\forall q\in L^2_0(\Omega)\\
&\langle\FF(t),\v\rangle=\int_{\B}\kappa\frac{\partial^2\X(\s,t)}{\partial s^2}
\cdot\v(\X(\s,t))\,d\s&&\forall\v\in(H^1_0(\Omega))^d\\
&\frac{\partial\X}{\partial t}(\s,t)=\u(\X(\s,t),t)&&\forall\s\in\B\\
&\u(\x,0)=\u_0(x)&&\forall\x\in\Omega\\
&\X(\s,0)=\X_0(\s)&&\forall\s\in\B.
\endaligned
\end{equation}
\label{pb:ibmvar1}
Here $\Grads=\Grad+(\Grad)^{\top}$ is the symmetric gradient.
\end{problem}

As described in Section~\ref{se:ibm}, the first model has been later modified
to include the case of a general hyperelastic material and different densities
in the fluid and the solid~\cite{BGHP,BCG2011}.
The variational formulation of Problem~\ref{pb:cont-original} reads
\begin{problem}
Given $\u_0\in(\Huo)^d$ and $\X_0:\B\to\Omega$, for almost every $t\in\OT$,
find $(\u(t),p(t))\in(\Huo)^d\times\Ldo$ and $\X(t) \in W^{1,\infty}(\B)$,
such that
\begin{equation}
\label{eq:ibmvar}
\aligned
  &\rho_f\left(\frac{d}{dt}(\u(t),\v)+
   \left((\u(t)\cdot\Grad\u(t),\v)-(\u(t)\cdot\Grad\v,\u(t))\right)/2\right)\\
  &\qquad+\nu(\Grads\u(t),\Grads\v)-(\div\v,p(t))=
   \langle\d(t),\v\rangle+\langle\FF(t),\v\rangle\quad&&
   \forall\v\in(\Huo)^d\\
  &(\div\u(t),q)=0&&\forall q\in\Ldo\\
  &\langle\d(t),\v\rangle=
   -\dr\int_\B\frac{\partial^2\X}{\partial t^2}\v(\X(\s,t))\,\ds
      &&\forall\v\in(\Huo)^d\\
  &\langle\FF(t),\v\rangle= - \int_\B \PP(\F(\s,t)): \Grad_s
\v(\X(\s,t))\,\ds
      &&\forall\v\in(\Huo)^d\\
  &\frac{\partial\X}{\partial t}(\s,t) =\u(\X(\s,t),t) &&\forall \s\in \B\\
  &\u(\x,0)=\u_0(\x)\ &&\forall \x\in\Omega\\
  &\X(\s,0)=\X_0(\s)\ &&\forall \s\in\B.
\endaligned
\end{equation}
\label{pb:ibmvar}
\end{problem}
In the last equation we have used the previously introduced definitions for
the scaled density difference
\[
\dr=\left\{\begin{array}{ll}
\rho_s-\rho_f &\text{if }\dim{\B_t}=d\\
(\rho_s-\rho_f)t_s&\text{if }\dim{\B_t}=d-1
\end{array}
\right.
\]
and the scaled Piola--Kirchhoff tensor
\[
\P=\left\{\begin{array}{ll}
\PPt &\text{if }\dim{\B_t}=d\\
t_s\PPt&\text{if }\dim{\B_t}=d-1,
\end{array}
\right.
\]
$\PPt$ being the standard Piola--Kirchhoff tensor (see~\eqref{eq:unification}).

We are now ready to describe a finite element formulation associated with
Problem~\ref{pb:ibmvar}. We start with the space semidiscretization which is
a more or less immediate consequence of our variational formulation.

Let $\tria$ be a standard triangulation of $\Omega$. It is important to remark
that this triangulation will never change during the computation. This is one
of the main differences with respect to other strategies, such as the
Arbitrary Lagrangian Eulerian (ALE) approach, where a moving mesh is
considered.

Then we consider two Stokes-stable finite element spaces $V_h\subset\Huo^d$
and $Q_h\subset\Ldo$.

We need a second mesh for the solid: this will be constructed on the reference
configuration $\B$ and will be denoted by $\triaB$. In general, we use
simplicial meshes for the solid and the corresponding finite element space is
defined as
\[
\Sh=\{\Y\in C^0(\B;\Omega):\Y|_{T_k}\in\mathcal{P}^1(T_k)^d,\ k=1,\dots,M_e\},
\]
where $T_k$ denotes an element of $\triaB$ and $M_e$ is the total number of
such elements.

It turns out that the space semidiscretization of Problem~\ref{pb:ibmvar}
reads
\begin{problem}
Given $\u_{h0}\in\Vh$ and $\X_{h0}\in\Sh$, for almost every $t\in\OT$, find
$(\u_h(t),p_h(t))\in\Vh\times\Qh$ and $\X_h(t)\in\Sh$, such that
\begin{alignat*}{2}
&\rho_f\left(\frac d {dt}(\u_h(t),\v)+((\u_h(t)\cdot\Grad\u_h(t),\v)-
(\u_h(t)\cdot\Grad\v,\u_h(t))/2\right)\\
&\qquad+\nu(\Grads\u_h(t),\Grads\v)\\
&\qquad-(\div \v,p_h(t))=\langle\d_h(t),\v\rangle+\langle\FF_h(t),\v\rangle
&&\forall \v\in\Vh\\
&(\div\u_h(t),q)=0&&\forall q\in\Qh\\
&\langle\d_h(t),\v\rangle=-\dr\int_\B\frac{\partial^2\X_h}{\partial
t^2}(t)\v(\X_h(t))\ds
&&\forall \v\in\Vh\\
&\langle\FF_h(t),\v\rangle=-\int_\B\P(\F_h(\s,t)):\Grad_s\v(\X(\s,t))\,d\s
&&\forall \v\in\Vh\\
&\u_h(\x,0)=\u_{h0}(\x)\ &&\forall \x\in\Omega\\
&\frac{\partial\X_{hi}}{\partial t}(t)=\u_h(\X_{hi}(t),t)
&&\forall i=1,\dots,M\\
&\X_{hi}(0)=\X_{h0}(\s_i) &&\forall i=1,\dots,M,
\end{alignat*}
where $M$ denotes the number of degrees of freedom in the space $\Sh$, $\s_i$
is the $i$-th vertex, $\X_{hi}=\X_h(\s_i)$, and $\F_h=\Grad_s\X_h$.
\label{pb:pbh}
\end{problem}

\begin{remark}
The evaluation of $\FF_h$ in Problem~\ref{pb:pbh} depends on the properties of
the solid body and on the approximating spaces. In particular, since we
assumed that $\X_h$ is piecewise linear, than the deformation gradient $\F_h$ is
piecewise constant and so is the approximate Piola--Kirchhoff tensor
$\P(\F_h)$. Hence the source term $\FF_h$ can be computed as follows:
\begin{equation}
\label{eq:discrforce}
\aligned
\langle\FF_h(t),\v\rangle&=
-\sum_{k=1}^{M_e}\int_{T_k}\PP|_{T_k}\colon\Grad_s\v(\X_h)\,\ds\\
&=-\sum_{k=1}^{M_e}\int_{\partial T_k}\PP|_{T_k}\N\cdot\v(\X_h)\dA\\
&=-\sum_{e\in\Eh}\int_e\jump{\PP}\cdot\v(\X_h)\dA\qquad\forall\v\in V_h,
\endaligned
\end{equation}
where $\jump{\PP}$, the jump of $\PP$ across the interelement edge $e$,
is defined as:
\begin{equation}
\jump{\PP}=\PP^+\N^+ +\PP^-\N^-,
\label{eq:jump}
\end{equation}
and $\N^+$ and $\N^-$ are the normals to the interface $e$
pointing outwards from the ``$+$'' or ``$-$'' element respectively. For more
details, the interested reader is referred to~\cite{BCG2011}.
\end{remark}

A fully discretized scheme for the approximation of Problem~\ref{pb:ibmvar}
has been introduced in~\cite{BCG2011}. If we denote by $\dt$ the time step
size and by $N$ the number of time steps,
then the approximation of the term $\d$ can be performed as follows:
\[
\langle\d_h^{n+1},\v\rangle=-\dr\int_\B\frac{\X_h^{n+1}-2\X_h^n+\X_h^{n-1}}{\dt^2}
\v(\X_h^n),
\]
where, as usual, the superscript $n$ refers to discrete quantities evaluated
after $n$ time steps.

For $n=0,\dots,N-1$ the solution strategy can then be summarized as follows.

{\bf Step 1.} Compute the source term $\FF^{n+1}_h$.
\begin{equation}
\label{eq:fsemi}
\langle\FF_h^{n+1},\v\rangle=
-\int_\B\P(\F_h^n(\s,t)):\Grad_s\v(\X^n_h(\s,t))\,d\s
\qquad\forall\v\in\Vh.
\end{equation}

{\bf Step 2.} Solve the Navier--Stokes equations:
find $(\u_h^{n+1},p_h^{n+1})\in \Vh\times\Qh$ such that
\begin{equation}
\label{eq:NSsemi}
\aligned
&\rho_f\left(\left(\frac{\u_h^{n+1}-\u_h^n}{\dt},\v\right)+
\left((\u_h^{n}\cdot\Grad\u_h^{n+1},\v)-(\u_h^{n}\cdot\Grad\v,\u_h^{n+1})\right)/2
\right)\\
&\quad+\nu(\Grads\u_h^{n+1},\Grads\v)
-(\div \v,p_h^{n+1})=&&\\
&\quad\displaystyle
-\dr\int_\B
\frac{\u_h^{n+1}(\X_h^n(s))-\u_h^n(\X_h^{n-1}(s))}{\dt}
\cdot\v(\X^{n}_h(s))\ds+\langle\FF_h^{n+1},\v\rangle&&
\forall \v\in\Vh\\[10pt]
&(\div\u_h^{n+1},q)=0&&\forall q\in\Qh.\\
\endaligned
\end{equation}

{\bf Step 3.} Advance the position of the structure (pointwise):
\begin{equation}
\label{eq:odesemi}
\frac{\X_{hi}^{n+1}-\X_{hi}^{n}}{\dt}=\u_h^{n+1}(\X_{hi}^{n})
\quad\forall i=1,\ldots,M.
\end{equation}

We observe that in Step~2 the unknown function $\u_h^{n+1}(\X_h^n(s))$
appears in the integral on the right hand side. By interpolating the basis
functions along the structure at time $t_n$, this gives an additional linear
contribution to the resulting algebraic system.

\section{The finite element Immersed Boundary Method with distributed Lagrange
multiplier}
\label{se:DLM}
In this section we describe the modification of the \feibm introduced
in~\cite{nuovodlm}. The main idea behind the new formulation consists in a
different treatment of the interaction between the fluid and solid velocities.
Namely, in Problem~\ref{pb:ibmvar} the motion of the
structure is designed by~\eqref{eq:velocity}, which for each $\s\in\B$ provides
an ordinary differential equation. As a consequence in the fully discretized
scheme we used~\eqref{eq:odesemi} to update the position of each point of the
structure. This gives rise to some restrictions on the choice of the
discretization parameters as it will be shown in the next section. In this
section we write~\eqref{eq:velocity} in variational form, so that we can have
more flexibility in the choice of the finite elements to be used.

Let us introduce three functional spaces $\LLambda$, $\Acca_1$ and $\Acca_2$
and two bilinear forms $\c_1:\LLambda\times\Acca_1\to\RE$ and
$\c_2:\LLambda\times\Acca_2\to\RE$ 
such that if $\v\in\Acca_1$ and $\Y\in\Acca_2$ are such that, for some given
$\overline\X$,
\[
\c_1(\mmu,\v(\overline\X))-\c_2(\mmu,\Y)=0\quad\forall\mmu\in\LLambda
\]
then $\v(\overline\X)=\Y$.

Then equation~\eqref{eq:velocity} can be written
\[
\c_1(\mmu,\u(\X(\cdot,t),t))-
\c_2\left(\mmu,\frac{\partial\X}{\partial t}(t)\right)=0
\quad\forall\mmu\in\LLambda,
\]
and can be interpreted as a constraint on our system. 
Therefore we introduce a Lagrange multiplier associated to such constraint
and split the first equation in Problem~\ref{pb:ibmvar} into two separate
equations, thus leading to the following \dlmibm version of the problem.

\begin{problem}
\label{pb:DLM}
Given $\u_0\in\Huo^d$ and $\X_0\in W^{1,\infty}(\B)$,
find $\u(t),p(t))\in\Huo^d\times\Ldo$, $\X(t)\in\Hub$, and
$\llambda(t)\in\LLambda$,
such that for almost every $t\in]0,T[$ it holds
\begin{equation}
\label{eq:DLM}
\aligned
  &\rho_f\left(\frac d {dt}(\u(t),\v)
  +\left((\u(t)\cdot\Grad\u(t),\v)-(\u(t)\cdot\Grad\v,\u(t))\right)/2\right)\\
  &\qquad+\nu(\Grads\u(t),\Grads\v)-(\div\v,p(t))\\
  &\qquad+\c_1(\llambda(t),\vcX)=0
   &&\ \forall\v\in\Huo^d
    \\
  &(\div\u(t),q)=0&&\ \forall q\in\Ldo
     \\
  &\dr\left(\frac{\partial^2\X}{\partial t^2}(t),\Y\right)_{\B}+
(\P(\F(t)),\Grad_s\Y)_{\B}-\c_2(\llambda(t),\Y)=0&&\ \forall\Y\in\Hub
     \\
  &\c_1(\mmu,\ucX)-
    \c_2\left(\mmu,\frac{\partial\X}{\partial t}(t)\right)
   =0 &&\ \forall\mmu\in\LLambda
     \\
  &\u(0)=\u_0\quad\mbox{\rm in }\Omega,\qquad\X(0)=\X_0\quad\mbox{\rm in }\B.
     \\
\endaligned
\end{equation}
\end{problem}
The finite element discretization of Problem~\ref{pb:DLM} is straightforward. 
In addition to the finite element spaces introduced in Sect.~\ref{se:fe-ibm},
we consider a finite element space $\LLambda_h\subseteq\LLambda$, so that we have
the following semidiscrete problem.
\begin{problem}
\label{pb:DLMh}
Given $\u_{h0}\in\Vh$ and $\X_{h0}\in\Sh$,
find $(\u_h(t),p_h(t))\in\Vh\times\Qh$, $\X_h(t)\in\Sh$, and
$\llambda_h(t)\in\Lh$,
such that for almost every $t\in]0,T[$ it holds
\begin{equation}
\label{eq:DLMh}
\aligned
  &\rho_f\left(\frac d {dt}(\u_h(t),\v)
  +\left((\u_h(t)\cdot\Grad\u_h(t),\v)-
   (\u(t)\cdot\Grad\v,\u_h(t))\right)/2\right)\\
  &\qquad+\nu(\Grads\u_h(t),\Grads\v)-(\div\v,p_h(t))\\
  &\qquad+\c_1(\llambda_h(t),\vcX)=0
   &&\ \forall\v\in\Vh
    \\
  &(\div\u_h(t),q)=0&&\ \forall q\in\Qh
     \\
  &\dr\left(\frac{\partial^2\X_h}{\partial t^2}(t),\Y\right)_{\B}+
(\P(\F_h(t)),\Grad_s\Y)_{\B}-\c_2(\llambda_h(t),\Y)=0&&\ \forall\Y\in\Sh
     \\
  &\c_1(\mmu,\uhcX)-\c_2\left(\mmu,\frac{\partial\X_h}{\partial t}(t)\right)
   =0 &&\ \forall\mmu\in\Lh
     \\
  &\u_h(0)=\u_{h0}\quad\mbox{\rm in }\Omega,
\qquad\X_h(0)=\X_{h0}\quad\mbox{\rm in }\B.
     \\
\endaligned
\end{equation}
\end{problem}
\begin{remark}
We observe that in this new formulation we do not need to evaluate the terms
$\d$ and $\F$, but the third equation represents the elasticity equation
with respect to the position of the body.  
\end{remark}
Let us introduce now the time discretization based on the Euler scheme. As in
the previous section, when computing along the structure terms involving
functions in $V_h$, we use the value of $\X$ at the previous time step, so that
we have the following scheme.
\begin{problem}
\label{pb:DLMhdt}
Given $\u_{h0}\in\Vh$ and $\X_{h0}\in\Sh$, suppose $\X_h^1\in\Sh$ has been
assigned (it can be computed formally by assuming $\X_h^{-1}=\mathbf{0}$ in
the scheme we are going to present).
Find $(\u_h^{n+1},p_h^{n+1})\in\Vh\times\Qh$, $\X_h^{n+1}\in\Sh$, and
$\llambda_h^{n+1}\in\Lh$, such that for all $n=1,\dots,N-1$ it holds
\begin{equation}
\label{eq:DLMhdt}
\aligned
  &\rho_f\left(\left(\frac{\u_h^{n+1}-\u_h^n}{\dt},\v\right)
  +\left((\u_h^n\cdot\Grad\u_h^{n+1},\v)-
   (\u_h^n\cdot\Grad\v,\u_h^{n+1})\right)/2\right)\\
  &\qquad+\nu(\Grads\u_h^{n+1},\Grads\v)-(\div\v,p_h^{n+1})\\
  &\qquad+\c_1(\llambda_h^{n+1},\vcXn)=0
   &&\ \forall\v\in\Vh
    \\
  &(\div\u_h^{n+1},q)=0&&\ \forall q\in\Qh
     \\
  &\dr\left(\frac{\X_h^{n+1}-2\X_h^n+\X_h^{n-1}}{\dt^2},\Y\right)_{\B}+
(\P(\F_h^{n+1}),\Grad_s\Y)_{\B}\\
  &\qquad-\c_2(\llambda_h^{n+1},\Y)=0&&\ \forall\Y\in\Sh
     \\
  &\c_1(\mmu,\uhcXn)-\c_2\left(\mmu,\frac{\X_h^{n+1}-\X_h^n}{\dt}(t)\right)
   =0 &&\ \forall\mmu\in\Lh
     \\
  &\u_h^0=\u_{h0}\quad\mbox{\rm in }\Omega,
\qquad\X_h^0=\X_{h0}\quad\mbox{\rm in }\B.
     \\
\endaligned
\end{equation}
\end{problem}
Problem~\ref{pb:DLMhdt} can be interpreted as a monolithic discretization of the
fluid-structure problem and, in the case of a linear model for the
Piola--Kirchhoff tensor $\P(\F)=\kappa\F=\kappa\Grad_s\X$,
has the following matrix structure:
\[
\left(
\begin{array}{cc|c|c}
\mathsf{A}&\mathsf{B}^\top&0&\mathsf{L}_f(\X_h^n)^\top\\
\mathsf{B}&0&0&0\\
\hline\\[-10pt]
0&0&\mathsf{A}_s&-\mathsf{L}^\top_s\\
\hline\\[-10pt]
\mathsf{L}_f(\X_h^n)&0&-\mathsf{L}_s&0
\end{array}
\right)
\left(
\begin{array}{c}
\u_h^{n+1}\\p_h^{n+1}\\
\hline\\[-10pt]
\X_h^{n+1} \\
\hline\\[-10pt]
\lh^{n+1}
\end{array}
\right)=
\left(
\begin{array}{c}
\mathsf{f}\\0\\
\hline\\[-10pt]
\mathsf{g}\\
\hline\\[-10pt]
\mathsf{d}
\end{array}
\right)
\]
where, denoting by $\pphi$, $\psi$, $\cchi$ and
$\zzeta$ the basis functions
respectively in $\Vh$, $\Qh$, $\Sh$ and $\Lh$, we have used the following
notation:
\[
\aligned
&\mathsf{A}=\frac{\rho_f}{\dt}\mathsf{M}_f+\mathsf{K}_f
\quad\text{with }(\mathsf{M}_f)_{ij}=(\pphi_j,\pphi_i),
\ (\mathsf{K}_f)_{ij}=a(\pphi_j,\pphi_i)+b(\u_h^n,\pphi_j,\pphi_i)\\
&\mathsf{B}_{ki}=-(\div\pphi_i,\psi_k)\\
&\mathsf{A}_s=\frac{\dr}{\dt^2}\mathsf{M}_s+\mathsf{K}_s
\quad\text{with }(\mathsf{M}_s)_{ij}=(\cchi_j,\cchi_i)_{\B},
\ (\mathsf{K}_s)_{ij}=\kappa(\Grad_s\cchi_j,\Grad_s\cchi_i)_{\B}\\
&(\mathsf{L}_f(\X_h^n))_{lj}=\c_1(\zzeta_l,\pphi_j(\X_h^n))\\
&(\mathsf{L}_s)_{lj}=\c_2(\zzeta_l,\cchi_j)\\
&\mathsf{f}_i=\frac{\rho_f}{\dt}(\mathsf{M}_f\u_h^n)_i\\
&\mathsf{g}_i=\frac{\dr}{\dt^2}\left(\mathsf{M}_s(2\X_h^n-\X_h^{n-1})\right)_i\\
&\mathsf{d}_l=-\frac1{\dt}(\mathsf{L}_s\X_h^n)_l.
\endaligned
\]
We observe that this system can be solved with a block iterative procedure
which allows for the use of Navier--Stokes and elasticity solvers. 

The block structure of the matrix highlights the fact that at each time step
we have to solve a saddle point problem whose analysis will be the object of a
forthcoming paper.

\section{Stability analysis}
\label{se:stab}
In this section we report the stability estimates for the \ibm
formulations of the fluid-structure interaction problem that we have
introduced in the previous sections. The main results concerns the stability in time
of the fully discrete schemes. We shall see that the \dlmibm method is superior
to the \feibm from this point of view since it is unconditionally stable, while
the \feibm scheme requires that the discretization parameters are chosen in a
appropriate way.

Since the solid is composed by a hyperelastic material, it is
characterized by a positive energy density
$W(\F)$ which depends only on the deformation gradient and
the first Piola--Kirchhoff stress tensor can be obtained by derivation with
respect to deformation gradient
$\P(\F(\s,t))=\frac{\partial W }{\partial \F}(\F(\s,t))$.
We assume that $W$ is a $C^1$ convex function over the set of second order
tensors. Moreover, the elastic potential energy of the body is given by:
\begin{equation}
\label{eq:potenergy}
E\left(\X(t)\right)=\int_\B W(\F(s,t))\ds.
\end{equation}
It is not difficult to show the following energy estimate for 
the continuous versions of \feibm and \dlmibm (see~\cite{BCG2011,nuovodlm}).
\begin{proposition}
\label{pr:stab-cont}
Let $\u(t)\in(\Huo)^d$, $p(t)\in\Ldo$ and $\X(t)\in\Hub$ be solution either of
Problem~\ref{pb:ibmvar} or Problem~\ref{pb:DLM}, then the following estimate
holds true
\begin{equation}
\label{eq:energyest}
\frac{\rho_f}2\frac{d}{dt}||\u(t)||^2_0+\nu||\Grads\u(t)||^2_0+
\frac{\dr}2\frac{d}{dt}\left\|\frac{\partial \X}{\partial t}\right\|^2_{0,\B}
+\frac{d}{dt}E(\X(t))=0,
\end{equation}
where $\|\cdot\|_0$ and $\|\cdot\|_{0,\B}$
denote the norms in $L^2(\Omega)$ and $L^2(\B)$, respectively.
\end{proposition}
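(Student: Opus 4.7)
The plan is to test each equation with its natural partner test function and exploit the kinematic constraint $\partial\X/\partial t=\u(\X(\cdot,t),t)$ in order to recognise every coupling term as a time-derivative of an energy.

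For Problem~\ref{pb:ibmvar} I first take $\v=\u(t)$ in the momentum equation and $q=p(t)$ in the incompressibility equation. The time-derivative pairing yields $\tfrac{\rho_f}{2}\tfrac{d}{dt}\|\u\|_0^2$; the symmetric-gradient term gives $\nu\|\Grads\u\|_0^2$; the convective expression $((\u\cdot\Grad\u,\v)-(\u\cdot\Grad\v,\u))/2$ vanishes trivially when $\v=\u$; the pressure pairing is eliminated by the incompressibility equation. For $\langle\d(t),\u\rangle$ the no-slip identity $\u(\X(\s,t),t)=\partial\X/\partial t$ turns the integrand into $(\partial^2\X/\partial t^2)\cdot(\partial\X/\partial t)$, producing $\tfrac{\dr}{2}\tfrac{d}{dt}\|\partial\X/\partial t\|_{0,\B}^2$. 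For $\langle\FF(t),\u\rangle$, differentiating $\u(\X(\s,t),t)=\partial\X/\partial t$ with respect to $\s$ gives $\Grad_s\u(\X(\s,t),t)=\partial\F/\partial t$; combined with $\P(\F)=\partial W/\partial\F$ and the definition~\eqref{eq:potenergy}, the chain rule for $W$ yields $\langle\FF(t),\u\rangle=-\int_\B\P(\F):(\partial\F/\partial t)\,\ds=-\tfrac{d}{dt}E(\X(t))$. Summing these identities delivers~\eqref{eq:energyest}.

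For Problem~\ref{pb:DLM} I would proceed analogously, using $\v=\u(t)$ and $q=p(t)$ in the first two equations, $\Y=\partial\X/\partial t$ in the third, and $\mmu=\llambda(t)$ in the constraint. The first equation now carries an extra contribution $\c_1(\llambda,\u(\X))$, while the third produces $\tfrac{\dr}{2}\tfrac{d}{dt}\|\partial\X/\partial t\|_{0,\B}^2+\tfrac{d}{dt}E(\X(t))-\c_2(\llambda,\partial\X/\partial t)$, the middle summand again by the chain rule for $W$ applied to $\F=\Grad_s\X$. The constraint tested with $\llambda$ is precisely $\c_1(\llambda,\u(\X))=\c_2(\llambda,\partial\X/\partial t)$, so in the sum of the three tested equations the multiplier contributions cancel and~\eqref{eq:energyest} follows. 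Note that the discrete bodies $\d$ and $\FF$ are not referenced on this route, which is consistent with the remark following Problem~\ref{pb:DLMh} that in the DLM formulation these objects have been absorbed into an elasticity block acting directly on $\X$.

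The one non-routine step, common to both formulations, is the identification $\Grad_s\u(\X(\cdot,t),t)=\partial\F/\partial t$: it uses the no-slip identity pointwise in $\s$, together with enough regularity of $\u$ and $\X$ to commute $\Grad_s$ and $\partial_t$ on the right-hand side and to invoke the chain rule on the left. The remaining work is just the algebraic combination of the tested equations, so this is the point that should be pinned down carefully if one wants a fully rigorous statement rather than a formal energy identity.
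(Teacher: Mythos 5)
Your proposal is correct and follows essentially the same route as the paper's proof: test the momentum equation with $\v=\u(t)$, identify $\langle\d,\u\rangle$ and $\langle\FF,\u\rangle$ as time derivatives of the kinetic and elastic energies via the kinematic identity and the chain rule for $W$, and for the DLM formulation use the test functions $\v=\u(t)$, $q=p(t)$, $\Y=\partial\X/\partial t$, $\mmu=\llambda(t)$ so that the multiplier terms cancel. Your explicit flagging of the commutation $\Grad_s\partial_t\X=\partial_t\F$ as the one step needing regularity is a sound observation, but otherwise the argument coincides with the one given in the paper.
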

\begin{proof}
Let us consider first Problem~\ref{pb:ibmvar}. We take $\v=\u(t)$
in the first equation of~\eqref{eq:ibmvar}, obtaining
\[
\frac{\rho_f}2\frac{d}{dt}||\u(t)||^2_0+\nu||\Grads\u(t)||^2_0=
\langle\d(t),\u\rangle+\langle\FF(t),\u\rangle.
\]
Then the
definitions of the source terms $\d$ and $\FF$ give:
\[
\aligned
\langle\d(t),\u\rangle&=
-\dr\int_\B\frac{\partial^2\X}{\partial t^2}\u(\X(\s,t))\ds
=-\dr\int_\B\frac{\partial^2\X}{\partial t^2}\frac{\partial\X}{\partial t}\ds\\
&=-\frac{\dr}2\frac{d}{dt}\int_\B\left(\frac{\partial\X}{\partial t}\right)^2\ds
=-\frac{\dr}2\frac{d}{dt}\left\|\frac{\partial\X}{\partial t}\right\|_{0,\B}^2
\\
\langle\FF(t),\u\rangle&=-\int_\B\P(\F(\s,t)):\Grad_s\u(\X(\s,t))\ds
=-\int_\B\P(\F(\s,t)):\Grad_s\frac{\partial\X}{\partial t}\ds\\
&=-\int_\B\P(\F(\s,t)):\frac{\partial\Grad_s\X}{\partial t}\ds=
-\int_\B\frac{\partial W}{\partial\F}(\F(\s,t)):\frac{\partial\F}{\partial t}\ds
\\
&=-\frac{d}{dt}\int_\B W(\F(\s,t))\ds=-\frac{d}{dt}E\left(\X(t)\right),
\endaligned
\]
which concludes the proof. 

For the stability of the solution of Problem~\ref{pb:DLM} the proof is even
simpler. It is enough to take as test functions $\v=\u(t)$, $q=p(t)$,
$\Y=\partial\X(t)\slash\partial t$, and $\mmu=\llambda(t)$ in the variational
equations in~\eqref{eq:DLM} and the result is achieved by summing up the
equations and using the same computation as before to deal with the term
containing the Piola--Kirchhoff stress tensor.
\end{proof}
The stability property of the semidiscrete problems can be obtained with the
same arguments as in Proposition~\ref{pr:stab-cont}.
\begin{proposition}
\label{pr:stab-semidiscr}
Let $\u_h(t)\in\Vh$, $p_h(t)\in\Qh$ and $\X_h(t)\in\Sh$ be solution of
Problem~\ref{pb:pbh} or Problem~\ref{pb:DLMh}, then the following estimate
holds true
\begin{equation}
\label{eq:energyesth}
\frac{\rho_f}2\frac{d}{dt}||\u_h(t)||^2_0+\nu||\Grads\u_h(t)||^2_0+
\frac{\dr}2\frac{d}{dt}\left\|\frac{\partial\X_h}{\partial t}\right\|^2_{0,\B}
+\frac{d}{dt}E(\X_h(t))=0.
\end{equation}
\end{proposition}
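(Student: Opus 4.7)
The plan is to replicate the proof of Proposition~\ref{pr:stab-cont} in the finite-dimensional setting, exploiting the fact that each test function used in the continuous derivation belongs to the corresponding finite element space.

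For the DLM-IBM case (Problem~\ref{pb:DLMh}) I would take $\v = \u_h(t) \in \Vh$ in the first equation of~\eqref{eq:DLMh}, $q = p_h(t) \in \Qh$ in the incompressibility equation, $\Y = \partial\X_h/\partial t(t) \in \Sh$ in the structure equation, and $\mmu = \llambda_h(t) \in \Lh$ in the coupling equation, then add the four equations. The pressure term $(\div \u_h, p_h)$ cancels by the discrete incompressibility; the skew-symmetric convective trilinear form vanishes at $\v = \u_h$; the viscous term produces $\nu \|\Grads \u_h\|^2_0$; the time derivative of $\u_h$ produces $(\rho_f/2)\, d\|\u_h\|^2_0/dt$; the solid inertial term produces $(\dr/2)\, d\|\partial\X_h/\partial t\|^2_{0,\B}/dt$; the hyperelastic term $(\P(\F_h), \Grad_s \partial\X_h/\partial t)_\B = (\P(\F_h), \partial\F_h/\partial t)_\B$ yields $dE(\X_h)/dt$ by the same chain-rule argument as in the continuous proof (valid because $W$ is $C^1$ and $\F_h$ is piecewise constant in $\s$). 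Finally, the coupling contributions $\c_1(\llambda_h, \u_h(\X_h))$ and $-\c_2(\llambda_h, \partial\X_h/\partial t)$ cancel because they sum to the left-hand side of the discrete constraint equation tested with $\mmu = \llambda_h$.

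For the \feibm case (Problem~\ref{pb:pbh}) the plan is to test the momentum equation with $\v = \u_h(t) \in \Vh$ and the incompressibility with $q = p_h(t) \in \Qh$, then invoke the nodal no-slip condition $\partial\X_{hi}/\partial t = \u_h(\X_{hi})$ to convert the terms $\langle\d_h, \u_h\rangle$ and $\langle\FF_h, \u_h\rangle$ into perfect time derivatives, reproducing the continuous manipulation verbatim: $\langle\d_h, \u_h\rangle$ becomes $-(\dr/2)\, d\|\partial\X_h/\partial t\|^2_{0,\B}/dt$ and $\langle\FF_h, \u_h\rangle$ becomes $-dE(\X_h)/dt$.

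The main obstacle is precisely at this last step in the \feibm case: the identity $\u_h(\X_h(\s,t),t) = \partial\X_h/\partial t(\s,t)$ holds only at the Lagrangian nodes $\s_i$, whereas the continuous argument needed it pointwise in $\s\in\B$. One has to argue that the integrals over $\B$ defining $\langle\d_h,\u_h\rangle$ and $\langle\FF_h,\u_h\rangle$ may be treated consistently with this nodal identification (for instance because $\partial\X_h/\partial t$ is the $\Sh$-interpolant of $\u_h\circ\X_h$, together with the piecewise-polynomial character of $\F_h$ and $\P(\F_h)$ on $\triaB$). In the \dlmibm formulation this obstacle disappears, since the kinematic constraint is enforced in variational rather than pointwise form, which is the structural reason the argument goes through cleanly and symmetrically for both problems.
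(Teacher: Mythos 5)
Your overall strategy is exactly the paper's: the paper proves only the continuous estimate (Proposition~\ref{pr:stab-cont}) and then asserts that the semidiscrete case follows by ``the same arguments.'' Your treatment of Problem~\ref{pb:DLMh} is complete and correct: all four test functions $\u_h(t)$, $p_h(t)$, $\partial\X_h/\partial t(t)$ and $\llambda_h(t)$ lie in the respective discrete spaces, the skew-symmetrized convective form vanishes, the coupling terms cancel through the discrete constraint, and the chain-rule identity $(\P(\F_h),\Grad_s\partial\X_h/\partial t)_{\B}=\frac{d}{dt}E(\X_h)$ is legitimate because $\F_h=\Grad_s\X_h$ and time differentiation commutes with $\Grad_s$. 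This is indeed the structural reason the \dlmibm argument is clean, as you say.

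The one genuine issue is the point you flag yourself in the \feibm case, and your sketched repair does not close it. Writing $\langle\d_h,\u_h\rangle=-\dr\int_\B\frac{\partial^2\X_h}{\partial t^2}\,\u_h(\X_h(\s,t),t)\,\ds$ and replacing $\u_h(\X_h(\s,t),t)$ by $\frac{\partial\X_h}{\partial t}(\s,t)$ is exact only at the Lagrangian vertices; on the interior of each $T_k$ the function $\s\mapsto\u_h(\X_h(\s,t),t)$ is a composition of piecewise polynomials on \emph{different} meshes and is generally not in $\Sh$, while $\partial\X_h/\partial t$ is only its nodal interpolant. The fact that one is the $\Sh$-interpolant of the other does not make the two integrals equal: the interpolation error does not integrate to zero against the piecewise linear weight $\partial^2\X_h/\partial t^2$ (nor, in the elastic term, against the piecewise constant $\P(\F_h)$), except in the degenerate situation where each Lagrangian element lies inside a single Eulerian element and $V_h$ is affine there. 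So as an \emph{exact} identity, \eqref{eq:energyesth} for Problem~\ref{pb:pbh} requires an additional convention — e.g.\ defining the coupling functionals $\d_h$ and $\FF_h$ with $\v(\X_h(\cdot,t))$ replaced by its $\Sh$-interpolant (equivalently, a vertex-based quadrature on $\triaB$), which is consistent with the nodal ODE \eqref{eq:odesemi} and makes your cancellation exact — or else it must be stated with an extra interpolation-error remainder. You identified the right obstacle; you should either adopt and state this convention explicitly or weaken the claim for the \feibm half, rather than leaving it at ``one has to argue.''
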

When we consider the fully discrete schemes,
namely~\eqref{eq:fsemi}-\eqref{eq:odesemi} for \feibm and
Problem~\ref{pb:DLMhdt}, the situation changes and we have different type of
results. In the case of \feibm we state in the following proposition that the
energy estimate holds true provided a \cfl condition is satisfied.
\begin{proposition}
\label{pr:stab-feibm}
Assume that the energy density $W$ is a $C^1$ convex function.
Given $\u_{0h}\in\Vh$ and $\X_{h0}\in\Sh$, for $n=1,\dots,N$ let
$\u_h^n\in\V_h$, $p_h^n\in\Qh$
and $\X_h^n\in\Sh$ satisfy~\eqref{eq:fsemi}-\eqref{eq:odesemi}. Then the
following energy estimate holds true
\begin{equation}
\label{eq:estsemi}
\begin{split}
\frac{\rho_f}{2\dt}
&\left(\|\u_h^{n+1}\|^2_{0}-\|\u_h^{n}\|^2_{0} \right)
+(\nu + \nu_a) \|\Grad \u_h^{n+1} \|^2_{0}
+\frac{1}{\dt}\left( E\left(\X_h^{n+1}\right)-E\left(\X_h^{n}\right)\right)\\
&+\frac1{2\dt}\dr
\left(\left\|\frac{\X_h^{n+1}-\X_h^n}{\dt}\right\|^2_{0,\B}-
\left\|\frac{\X_h^{n}-\X_h^{n-1}}{\dt}\right\|^2_{0,\B}\right)
\le0
\end{split}
\end{equation}
where $\nu_a$ is given by
\begin{equation}
\label{eq:artificial-viscosity-fe-be}
\nu_a = -\kappa_{max}  Ch_s^{m-2}\,h_x^{-(d-1)}\dt L^nC_e^{n},
\end{equation}
$L^n$ is the maximum distance between any two consecutive
vertices of the Lagrangian mesh and  $C_e^n$ stands for the maximum
number of Lagrangian elements that touch the same Eulerian element at the
given time step. 
\end{proposition}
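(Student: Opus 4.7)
The plan is to test the momentum equation~\eqref{eq:NSsemi} with $\v=\u_h^{n+1}$ and the incompressibility equation with $q=p_h^{n+1}$, then exploit the discrete ODE~\eqref{eq:odesemi} together with the convexity of $W$ to move $\langle\d_h^{n+1},\u_h^{n+1}\rangle$ and $\langle\FF_h^{n+1},\u_h^{n+1}\rangle$ to the left-hand side as telescoping inertial and elastic energy increments. The mismatch produced by the explicit evaluation of $\F_h^n$ and by the sampling of the Eulerian velocity along the Lagrangian mesh survives as a residual that is absorbed in the viscous term and is responsible for the artificial-viscosity term $\nu_a$.

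Testing as above, the pressure--divergence coupling cancels by incompressibility, the skew-symmetric convective form vanishes identically, and the standard identity $(a-b,a)=\tfrac12(\|a\|^2-\|b\|^2+\|a-b\|^2)$ applied to the time-difference term yields, after discarding the non-negative numerical-dissipation remainder,
\[
\frac{\rho_f}{2\dt}\left(\|\u_h^{n+1}\|_0^2-\|\u_h^n\|_0^2\right)
+\nu\|\Grads\u_h^{n+1}\|_0^2
\le \langle\d_h^{n+1},\u_h^{n+1}\rangle+\langle\FF_h^{n+1},\u_h^{n+1}\rangle.
\]
For the inertial source, the key observation is that by~\eqref{eq:odesemi} the piecewise-linear Lagrangian interpolants on $\triaB$ of $\u_h^{n+1}(\X_h^n(\cdot))$ and $\u_h^n(\X_h^{n-1}(\cdot))$ coincide with $(\X_h^{n+1}-\X_h^n)/\dt$ and $(\X_h^n-\X_h^{n-1})/\dt$ respectively; substituting these interpolants into the definition of $\d_h^{n+1}$ and applying the same algebraic identity in $L^2(\B)$ produces the inertial telescope of~\eqref{eq:estsemi}, again up to a non-negative remainder that is dropped.

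For the elastic source I would use the splitting $\u_h^{n+1}(\X_h^n(\s))=(\X_h^{n+1}(\s)-\X_h^n(\s))/\dt+\w(\s)$, with $\w$ the Lagrangian interpolation defect, which gives
\[
-\langle\FF_h^{n+1},\u_h^{n+1}\rangle
=\frac{1}{\dt}\int_\B \P(\F_h^n)\colon(\F_h^{n+1}-\F_h^n)\,\ds
+\int_\B \P(\F_h^n)\colon\Grad_s\w\,\ds.
\]
Adding and subtracting $\P(\F_h^{n+1})$ in the first integrand and invoking convexity of $W$ in the implicit form $\P(\F_h^{n+1})\colon(\F_h^{n+1}-\F_h^n)\ge W(\F_h^{n+1})-W(\F_h^n)$ produces the desired elastic telescope $\tfrac{1}{\dt}(E(\X_h^{n+1})-E(\X_h^n))$, leaving the monotonicity defect $\tfrac{1}{\dt}\int_\B[\P(\F_h^{n+1})-\P(\F_h^n)]\colon(\F_h^{n+1}-\F_h^n)\,\ds$ together with the interpolation term $\int_\B\P(\F_h^n)\colon\Grad_s\w\,\ds$ as residuals.

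The hard part is bounding these two residuals by $|\nu_a|\,\|\Grads\u_h^{n+1}\|_0^2$ with $\nu_a$ of the form~\eqref{eq:artificial-viscosity-fe-be}. Lipschitz continuity of $\P$ with constant $\kappa_{\max}$ controls the monotonicity defect by $(\kappa_{\max}/\dt)\|\F_h^{n+1}-\F_h^n\|_{0,\B}^2$; writing $\F_h^{n+1}-\F_h^n=\dt\,\Grad_s[(\X_h^{n+1}-\X_h^n)/\dt]$ and recalling that $(\X_h^{n+1}-\X_h^n)/\dt$ is the nodal Lagrangian interpolant of $\u_h^{n+1}\!\circ\X_h^n$, both residuals reduce to controlling the Lagrangian $H^1(\B)$ seminorm of $\u_h^{n+1}\!\circ\X_h^n$ by the Eulerian $H^1(\Omega)$ seminorm of $\u_h^{n+1}$. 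A local scaling argument on each Lagrangian element $T_k$ combined with an inverse estimate for the piecewise-polynomial Eulerian field yields the Lagrangian power $h_s^{m-2}$ and the edge factor $L^n$; summing over $\triaB$ while counting, through $C_e^n$, how many Lagrangian elements overlap each Eulerian cell produces the missing power $h_x^{-(d-1)}$. Multiplying by $\dt\,\kappa_{\max}$ reproduces exactly~\eqref{eq:artificial-viscosity-fe-be}, and moving this residual to the left-hand side concludes the proof. The combinatorial-geometric transfer between the two independent meshes is the most delicate step and the primary technical obstacle.
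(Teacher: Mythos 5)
The paper states Proposition~\ref{pr:stab-feibm} without proof, deferring to~\cite{BCG2011}, so your argument can only be judged on its own terms. Your skeleton is the standard one and is, in outline, the right reconstruction: test~\eqref{eq:NSsemi} with $\v=\u_h^{n+1}$ and $q=p_h^{n+1}$, kill convection by skew-symmetry and pressure by incompressibility, telescope the kinetic terms via $(a-b,a)=\tfrac12(\|a\|^2-\|b\|^2+\|a-b\|^2)$, use~\eqref{eq:odesemi} to identify $(\X_h^{n+1}-\X_h^n)/\dt$ with the $\Sh$-interpolant of $\u_h^{n+1}\circ\X_h^n$, obtain the elastic telescope from convexity of $W$ written at $\F_h^{n+1}$, and pay for the explicit evaluation of $\P$ at $\F_h^n$ with the monotonicity defect $\frac{1}{\dt}\int_\B[\P(\F_h^{n+1})-\P(\F_h^n)]:(\F_h^{n+1}-\F_h^n)\,\ds\le\kappa_{max}\,\dt\,\|\Grad_s I_h(\u_h^{n+1}\circ\X_h^n)\|^2_{0,\B}$, which a Lagrangian-to-Eulerian inverse estimate converts into $-\nu_a\|\Grad\u_h^{n+1}\|^2_0$. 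Note also that dropping the quadratic remainder in the inertial telescope with the correct sign implicitly requires $\dr\ge0$, which you should state.

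There are, however, two gaps. The substantive one is your interpolation defect $\int_\B\P(\F_h^n):\Grad_s\w\,\ds$: this term is \emph{linear} in $\u_h^{n+1}$ (through $\w$) and is weighted by $\P(\F_h^n)$, so it cannot be ``reduced to controlling the Lagrangian $H^1(\B)$ seminorm of $\u_h^{n+1}\circ\X_h^n$'' and absorbed into $|\nu_a|\,\|\Grad\u_h^{n+1}\|_0^2$ as you claim; any Young-type absorption would leave behind a term proportional to $\|\P(\F_h^n)\|^2_{0,\B}$, i.e.\ to the elastic state at step $n$, and no such term appears in~\eqref{eq:estsemi}. You must either show this term vanishes --- which happens precisely when $\v(\X_h^n)$ in~\eqref{eq:fsemi} is understood as the $\Sh$-interpolant of $\v\circ\X_h^n$, so that $\w\equiv0$ and the only residual is the monotonicity defect --- or handle it by a dedicated argument; as written it is unaccounted for. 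The second point is that the mesh-transfer estimate producing the exponents $h_s^{m-2}h_x^{-(d-1)}$ and the factors $L^n$, $C_e^n$ is asserted rather than proved; you rightly flag it as the hard step, but it is the actual content of the proposition (it is what generates Table~\ref{tb:cfl}), so a complete proof must carry out the elementwise change of variables $\s\mapsto\X_h^n(\s)$, the bound $|\F_h^n|\lesssim L^n/h_s$, and the trace and inverse inequalities on the Eulerian mesh together with the overlap count $C_e^n$.
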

We summarize the \cfl condition for different values of fluid
and solid dimension in Table~\ref{tb:cfl}.
\begin{table}
\begin{center}
\begin{tabular}{c|c|l}
\hline
\rule{0pt}{12pt}
space dim.&solid dim.&\cfl condition\\
\hline%
\rule{0pt}{12pt}%
$2$&$1$&$L^n\Delta t\le Ch_x h_s$\\
$2$&$2$&$L^n\Delta t\le Ch_x$\\
$3$&$2$&$L^n\Delta t\le Ch_x^2$\\
$3$&$3$&$L^n\Delta t\le Ch_x^2/h_s$\\
\hline
\end{tabular}
\end{center}
\caption{\cfl condition according to fluid and structure dimensions.}
\label{tb:cfl}
\end{table}

The fully discrete \dlmibm scheme is unconditionally stable.
\begin{proposition}
\label{pr:stab-DLM}
Assume that the energy density $W$ is a $C^1$ convex function.
Let $\u^n\in(\Huo)^d$ and $\X^n\in\Hub$
for $n=0,\dots,N$ satisfy Problem~\ref{pb:DLMhdt} with
$\X^n\in W^{1,\infty}(\B)^d$, then
the following estimate holds true for all $n=0,\dots,N-1$
\begin{equation}
\label{eq:energy_sd}
\aligned
&\frac{\rho_f}{2\dt}\left(\|\u^{n+1}\|^2_0-\|\u^n\|^2_0\right)+
\nu\|\Grads\u^{n+1}\|^2_0\\
&+\frac{\dr}{2\dt}\left(\left\|\frac{\X^{n+1}-\X^n}{\dt}\right\|^2_{0,\B}
-\left\|\frac{\X^n-\X^{n-1}}{\dt}\right\|^2_{0,\B}\right)+
\frac{E(\X^{n+1})-E(\X^n)}{\dt} \le0.
\endaligned
\end{equation}
\end{proposition}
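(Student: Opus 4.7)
The plan is to mimic the semidiscrete argument used in the proof of Proposition~\ref{pr:stab-cont}, replacing time derivatives by backward differences and the chain rule by the convexity of $W$. The idea is to test each of the four equations in~\eqref{eq:DLMhdt} against the canonical choices and add.

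First I would take $\v=\u_h^{n+1}$ in the momentum equation and $q=p_h^{n+1}$ in the incompressibility equation. The skew-symmetrized convective form vanishes identically, the pressure-divergence pair cancels between the two equations, and the backward-Euler time derivative yields, via the elementary identity $(a-b)\cdot a\ge(|a|^2-|b|^2)/2$, the kinetic energy increment $\rho_f(\|\u_h^{n+1}\|_0^2-\|\u_h^n\|_0^2)/(2\dt)$ together with the dissipation $\nu\|\Grads\u_h^{n+1}\|_0^2$. A single coupling contribution $\c_1(\llambda_h^{n+1},\u_h^{n+1}(\X_h^n))$ is left over.

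Next I would test the solid equation with $\Y=(\X_h^{n+1}-\X_h^n)/\dt$. The second-order-difference mass term, rewritten as the difference of two consecutive first-order differences, produces by the same elementary identity the discrete kinetic-energy increment of $\X_h$ appearing in~\eqref{eq:energy_sd}. For the elastic term, convexity of $W$ together with $\P=\partial W/\partial\F$ gives the pointwise inequality
\[
\P(\F_h^{n+1}):(\F_h^{n+1}-\F_h^n)\geq W(\F_h^{n+1})-W(\F_h^n),
\]
which, after integrating over $\B$ and observing that $\Grad_s\Y=(\F_h^{n+1}-\F_h^n)/\dt$, produces exactly the increment $(E(\X_h^{n+1})-E(\X_h^n))/\dt$. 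A second coupling contribution $-\c_2(\llambda_h^{n+1},(\X_h^{n+1}-\X_h^n)/\dt)$ is left over.

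Finally I would test the constraint equation with $\mmu=\llambda_h^{n+1}$: the resulting identity cancels exactly the two leftover multiplier terms from the previous two steps, and adding the three inequalities delivers~\eqref{eq:energy_sd}. The one place requiring genuine work rather than discrete bookkeeping is the elastic step, where the $C^1$-convexity of $W$ is essential to turn the virtual work of $\P(\F_h^{n+1})$ into a true decrease of $E$; without convexity this estimate would fail. The unconditional character of the bound, in contrast with~\eqref{eq:estsemi}, is a direct consequence of the exact algebraic cancellation of the multiplier contributions, which does not demand any CFL-like compatibility between the Eulerian mesh parameter $h_x$ and the Lagrangian one $h_s$.
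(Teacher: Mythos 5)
Your argument is correct and is exactly the discrete analogue of the strategy the paper itself uses for Proposition~\ref{pr:stab-cont} (test with $\v=\u^{n+1}$, $q=p^{n+1}$, $\Y=(\X^{n+1}-\X^n)/\dt$, $\mmu=\llambda^{n+1}$, sum, and replace the chain rule on $W$ by the convexity inequality $\P(\F^{n+1}):(\F^{n+1}-\F^n)\ge W(\F^{n+1})-W(\F^n)$); the paper defers the detailed proof to~\cite{nuovodlm}, but that reference proceeds in precisely this way. The only implicit point worth making explicit is that discarding the nonnegative remainder in $(a-b,a)\ge(\|a\|^2-\|b\|^2)/2$ for the inertial solid term requires $\dr\ge0$, an assumption the statement (and the paper) leaves tacit.
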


\section{Numerical tests}
\label{se:num}

In this section we report on some numerical tests that we have performed
during our research in the finite element approach to \ibm.
The aim of these tests is to show that the \feibm and \dlmibm methods can be
efficiently implemented and used for the approximation of fluid-structure
interaction problems.
In particular, the presented results have been already published in previous
papers or have never been published before even if they have been obtained as
the results of previous research. This is the case, for instance, of the
snapshots of the presented animations.

We start by showing some snapshots taken from a three dimensional simulation
involving the interaction of a codimension one closed solid surface and a
fluid contained in a cubic box.
The initial configuration is reported in Figure~\ref{fig:ellipsoid0} and
correspond to an ellipsoid stretched in one of the horizontal directions.
\begin{figure}
\includegraphics[width=8cm]{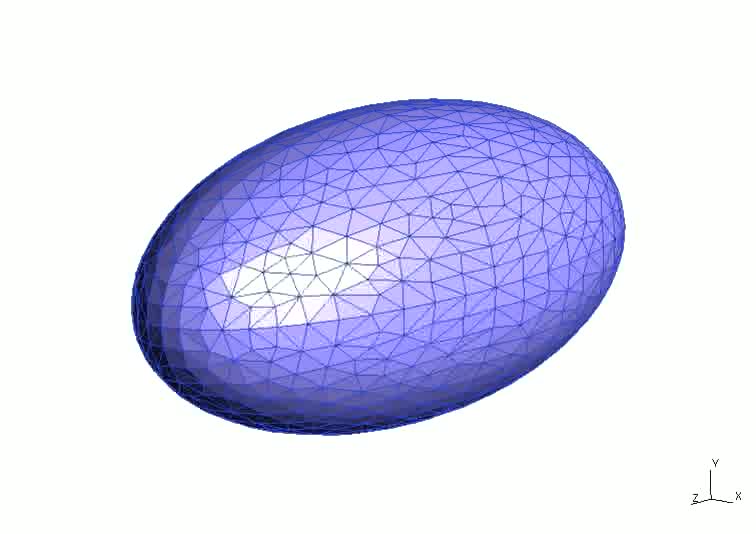}
\caption{\feibm simulation of an elastic ellipsoid: initial
configuration\label{fig:ellipsoid0}}
\end{figure}
The initial fluid is at rest, so that the system is driven only by the elastic
force of the solid which is tending to its spherical equilibrium
configuration. The evolution of the system is reported in
Figure~\ref{fig:ellipsoid}: as expected the solid tends to a sphere.

\begin{figure}
\includegraphics[width=5cm]{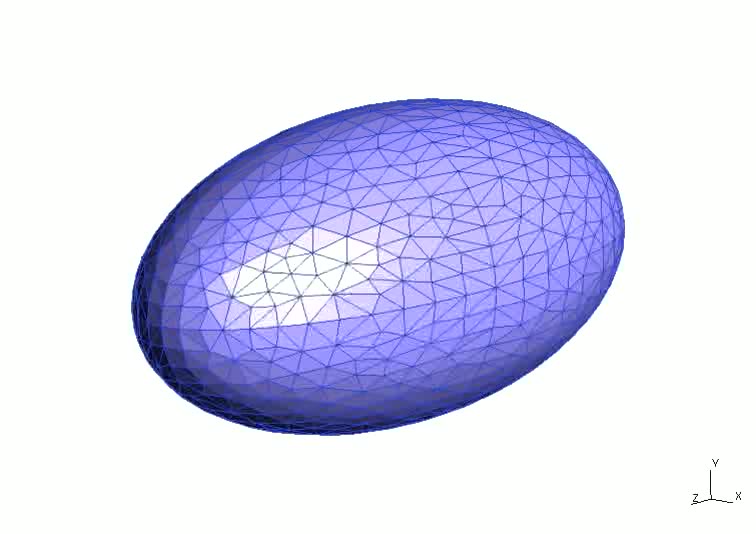}
\includegraphics[width=5cm]{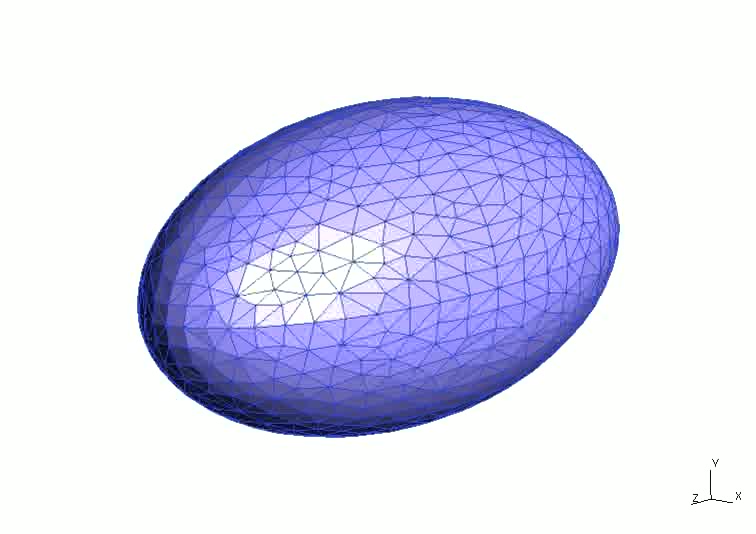}\\
\includegraphics[width=5cm]{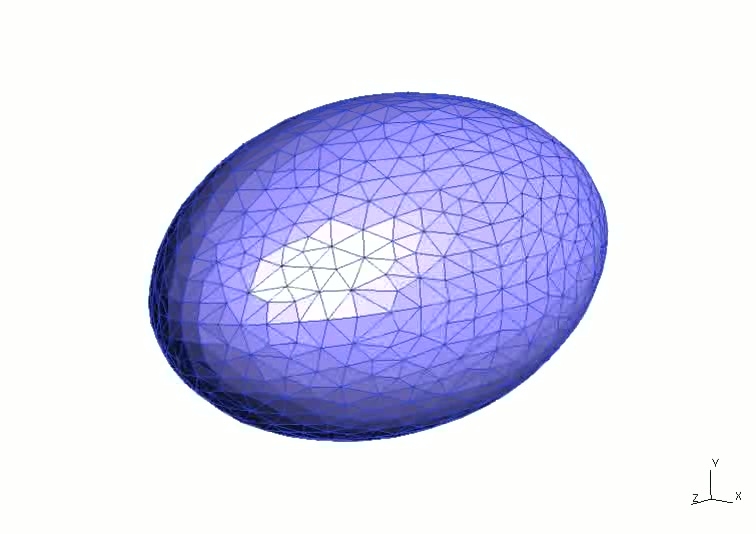}
\includegraphics[width=5cm]{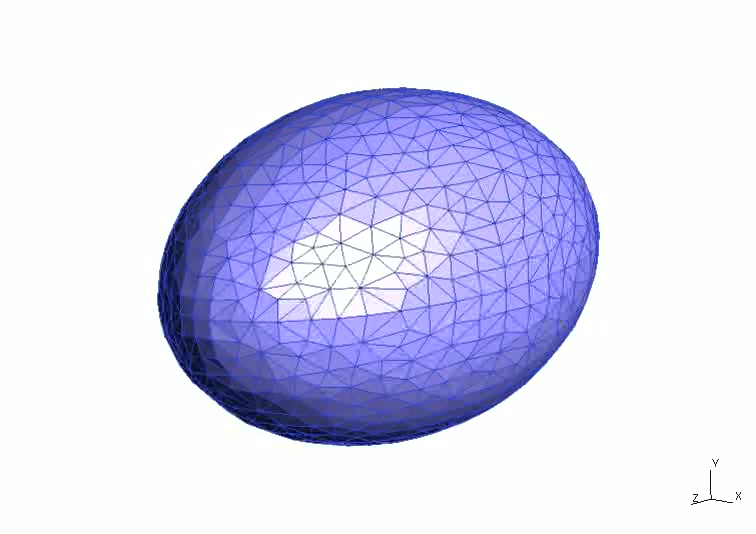}\\
\includegraphics[width=5cm]{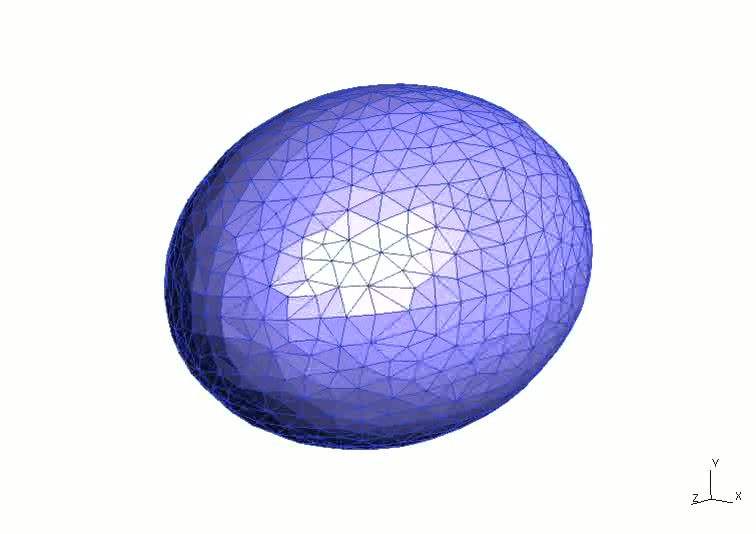}
\includegraphics[width=5cm]{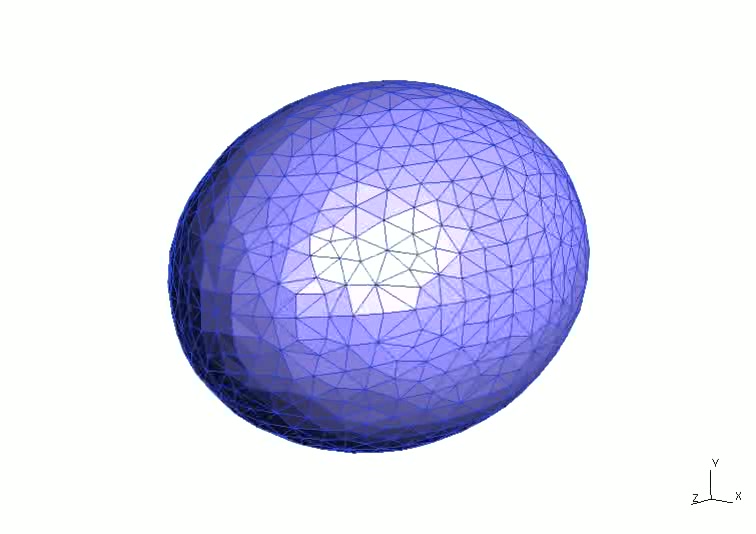}
\caption{\feibm simulation of an elastic ellipsoid: snapshots of the time
evolution (left-to-right and top-to-bottom)\label{fig:ellipsoid}}
\end{figure}

Our second simulation discusses the situation when more than one solid is
present. More precisely, the initial configuration is reported in
Figure~\ref{fig:tube0} where two circular structures of codimension one are
immersed in a fluid confined in a rectangular box.
\begin{figure}
\includegraphics[width=8cm]{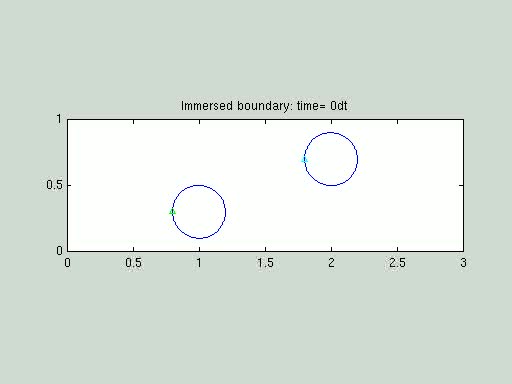}
\caption{\feibm simulation of two elastic bodies: initial 
configuration\label{fig:tube0}}
\end{figure}
In the top half of the box the fluid is moving from right to left, while in
the bottom part the fluid moves in the opposite direction.
Figure~\ref{fig:tube} shows the evolution of the system: it can be appreciated
that the two structures change their shape as a consequence of their
interaction and the incompressibility of the fluid.

\begin{figure}
\includegraphics[width=5cm]{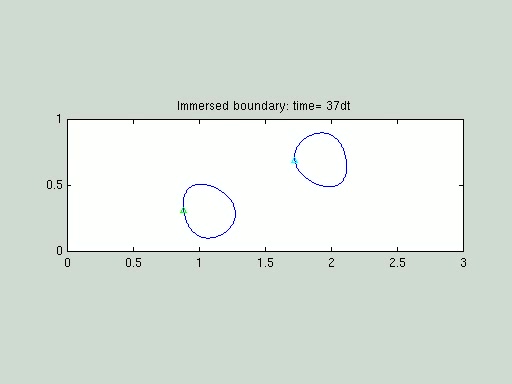}
\includegraphics[width=5cm]{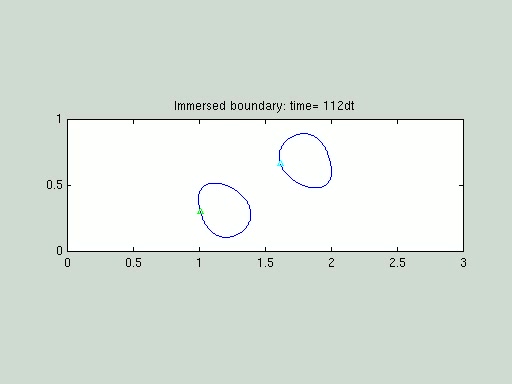}\\
\includegraphics[width=5cm]{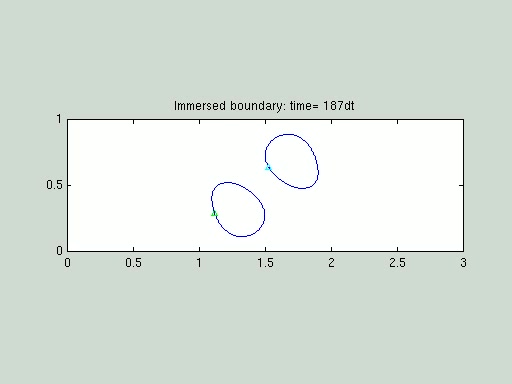}
\includegraphics[width=5cm]{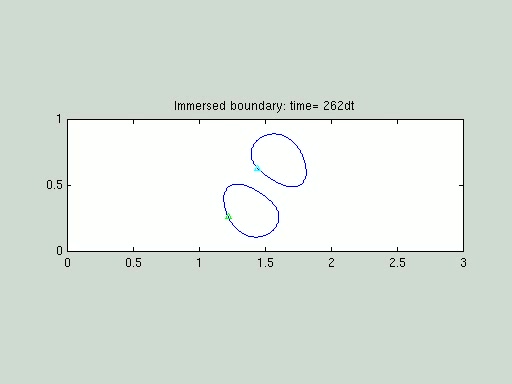}\\
\includegraphics[width=5cm]{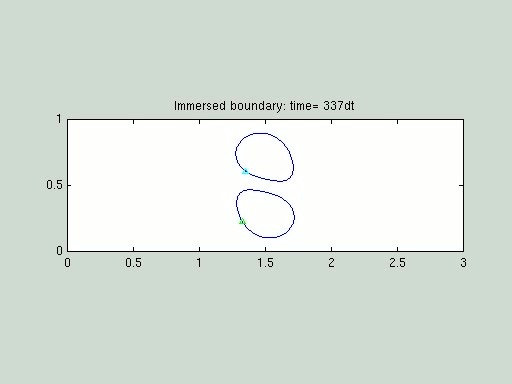}
\includegraphics[width=5cm]{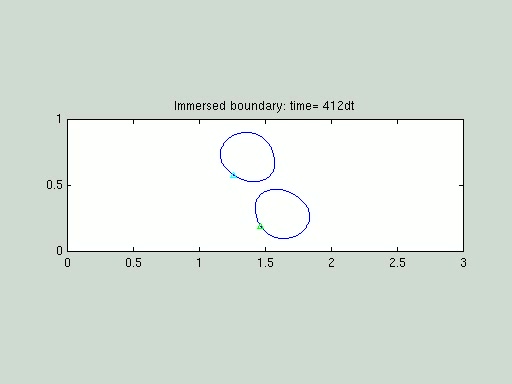}
\caption{\feibm simulation of two circular structures moving in opposite
directions: snapshots of the time evolution (left-to-right and
top-to-bottom)\label{fig:tube}}
\end{figure}

The next set of numerical tests discusses the mass conservation properties of
the \feibm in the spirit of~\cite{bcgg2012,bcggumi}. In this case it has been
shown that the mass conservation depends on the ability of the Stokes solver
to provide a good discretization of the divergence free condition. It is clear
that discontinuous pressure finite elements provide a better approximation of
the divergence free constraint. In Figure~\ref{fig:mass} we compare continuous
(solid) and discontinuous (dashed) pressure approximation for the \feibm. The
test is the two dimensional analogue of the one reported in
Figures~\ref{fig:ellipsoid0} and~\ref{fig:ellipsoid}: an elliptic elastic
string tends to a circular equilibrium configuration. The mass conservation
property is expressed by the conservation of the area inside the structure. We
use Hood--Taylor and $P_1$-iso-$P_2-P_0$ Stokes element for the continuous
pressure simulations (see~\cite{bbf}) and the corresponding enhanced elements
presented in~\cite{bcgg2012} for the discontinuous case.
\begin{figure}
\includegraphics[width=8cm]{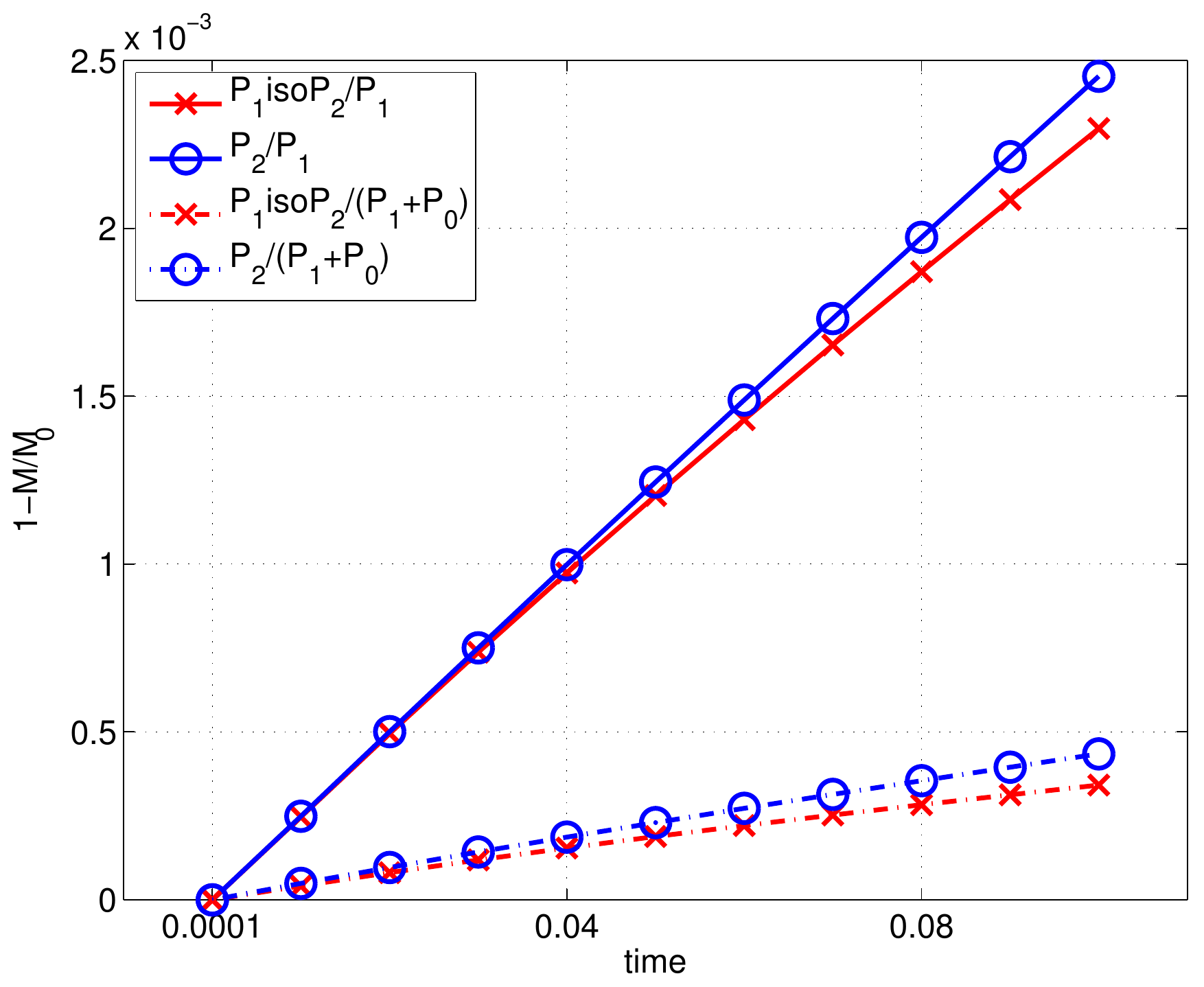}
\caption{Mass conservation of continuous versus discontinuous pressure finite
elements\label{fig:mass}}
\end{figure}
The slope of the lines corresponds to the rate of mass loss with respect to
time.

Figure~\ref{fg:p3p2} shows that in this respect computations obtained with
\dlmibm are better than the original \feibm. The mesh of the structure
(an ellipse tending to a circle) is very coarse in order to emphasize the
phenomenon and the used Stokes element is the enriched higher-order
Hood--Taylor $P_3-(P_2^c+P_1)$. This aspect will be investigated in our
future research.

\begin{figure}
\subcaptionbox{\feibm computation}
{\includegraphics[width=5cm]{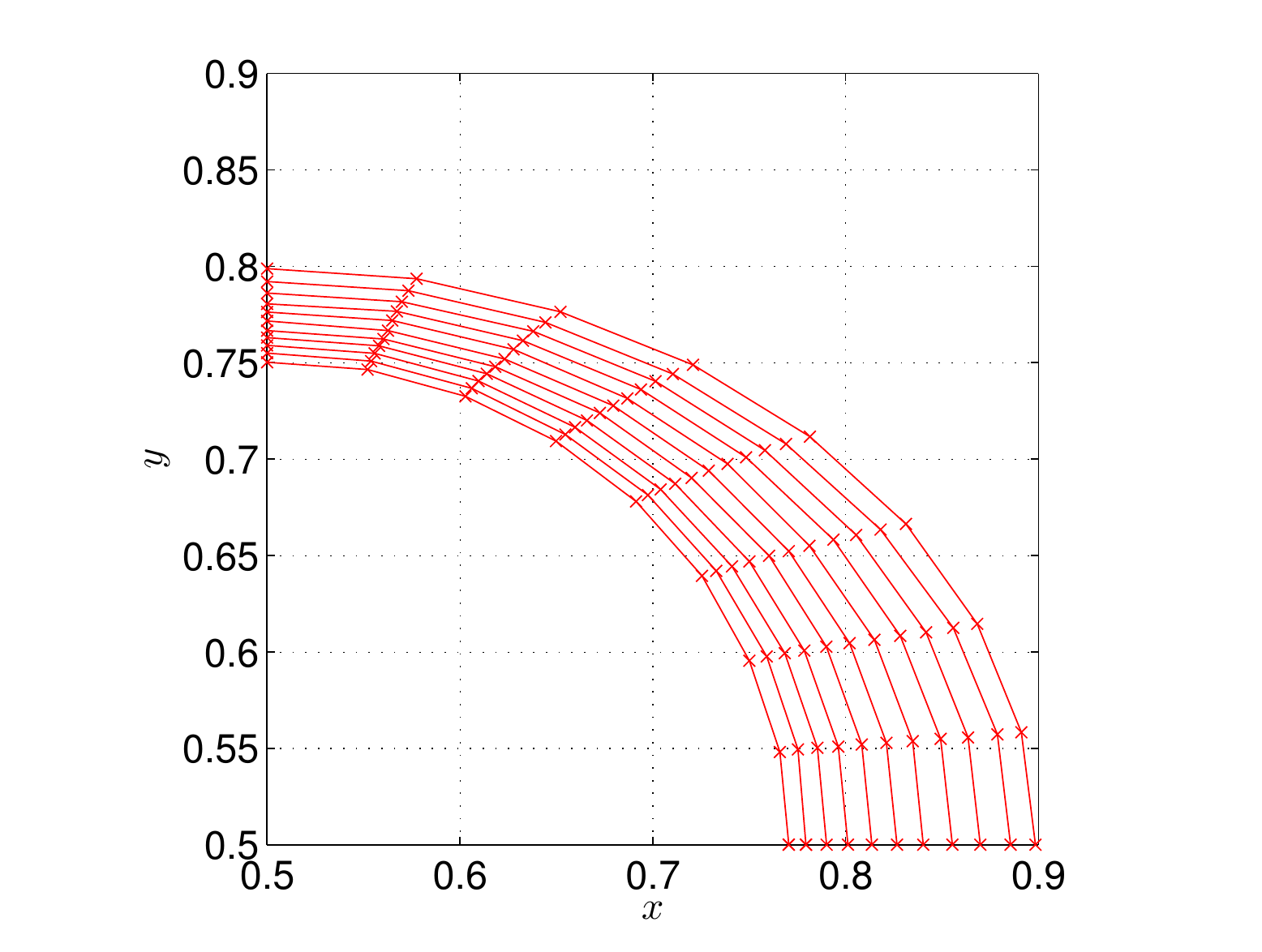}}
\subcaptionbox{\dlmibm computation}
{\includegraphics[width=5cm]{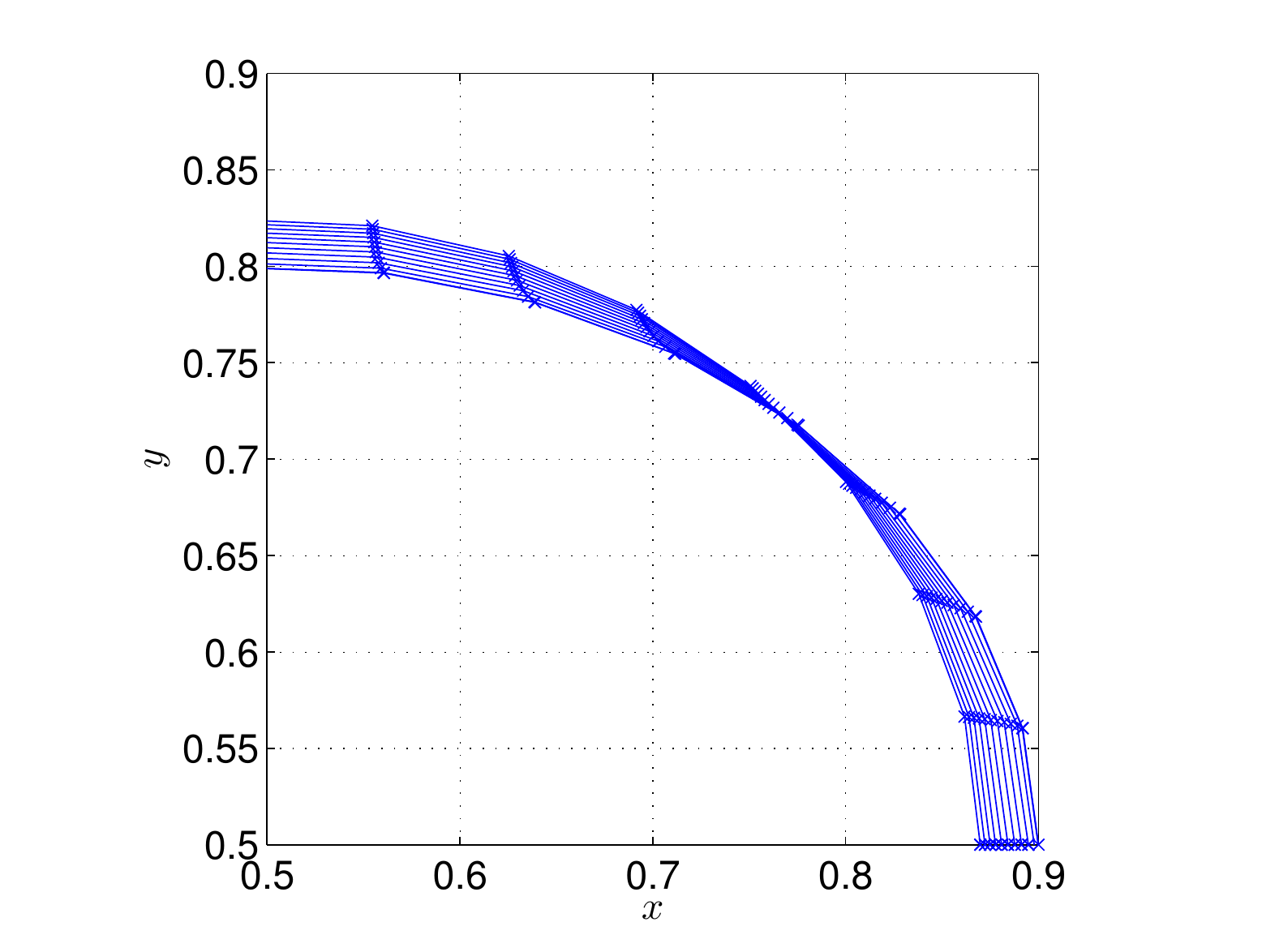}}
\caption{Mass conservation for \feibm and \dlmibm\label{fg:p3p2}}
\end{figure}

The last set of numerical experiments corresponds to the stability of the
fully discrete scheme. It is well known that non-implicit schemes usually
result in severe restrictions to the time step. For the ALE formulation of
fluid-structure interactions, it has been shown that this can produce a
unconditionally unstable method~\cite{causin}.
In Section~\ref{se:stab} we recalled that, on the other hand, the
semi-implicit formulation of our \feibm has been proved to be stable if a
suitable \cfl condition is satisfied. In Section~\ref{se:stab} it is also
recalled that the stability properties of the \dlmibm are even better: in this
case unconditional stability occurs, as it has been shown in~\cite{nuovodlm}.
In Figures~\ref{fig:thin_energy_rho03} and~\ref{fig:thick_energy_rho03} we
report the ratio of the total energy at time $n$ over the initial energy.
When the stability is violated, the ratio blows up; on the other hand the
energy remains bounded if the method is stable. 

 \begin{figure}
 \centering
 \subcaptionbox{$\Delta t = 10^{-1}$, $h_s = 1/8$.}
   {\includegraphics[width=4cm]{./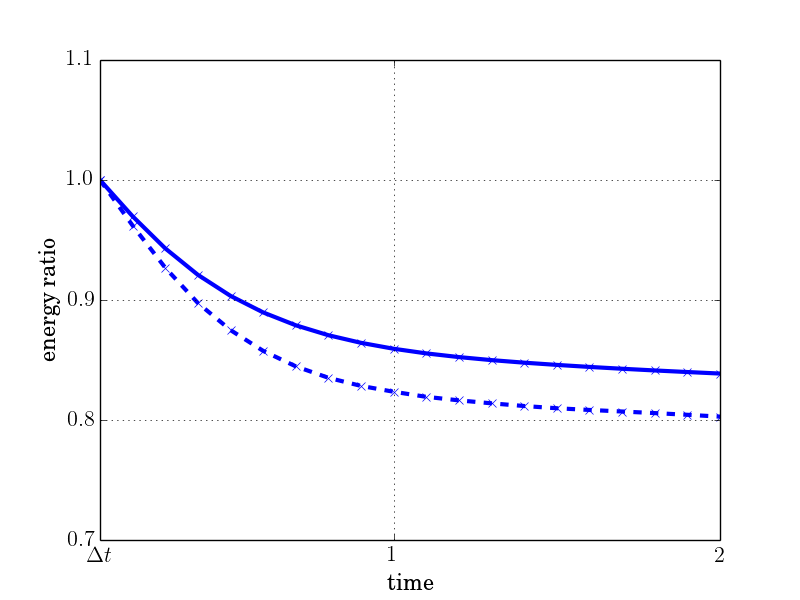}}
 \subcaptionbox{$\Delta t = 10^{-1}$, $h_s = 1/16$.}
   {\includegraphics[width=4cm]{./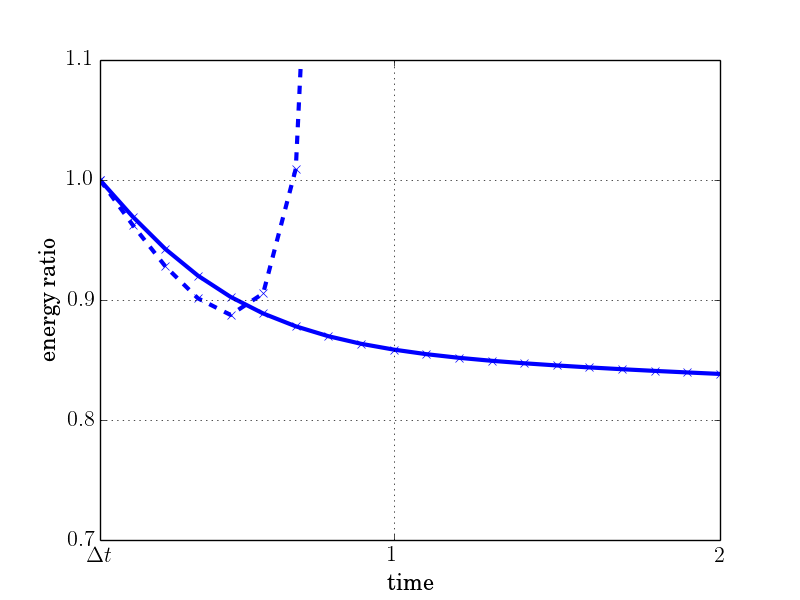}}
 \subcaptionbox{$\Delta t = 10^{-1}$, $h_s = 1/32$.}
   {\includegraphics[width=4cm]{./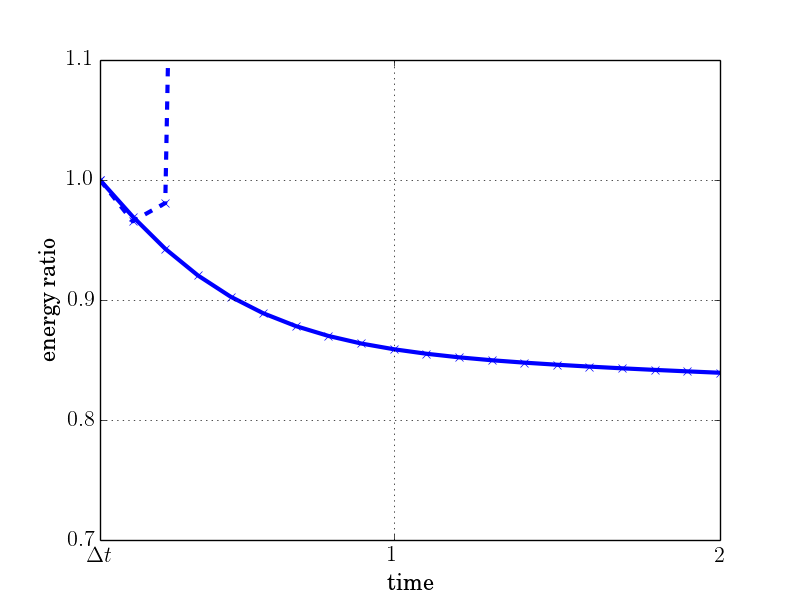}}\\
 \subcaptionbox{$\Delta t = 5\cdot 10^{-2}$, $h_s = 1/8$.}
   {\includegraphics[width=4cm]{./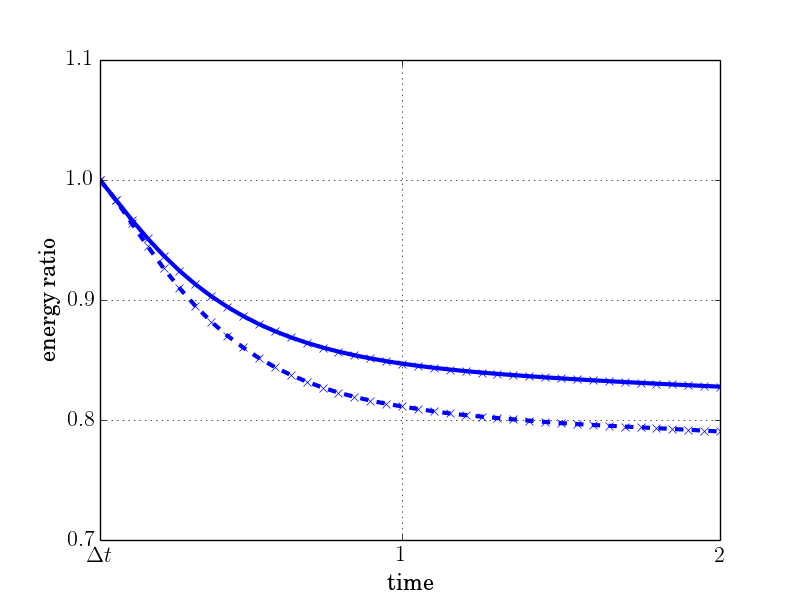}}
 \subcaptionbox{$\Delta t = 5\cdot 10^{-2}$, $h_s = 1/16$.}
   {\includegraphics[width=4cm]{./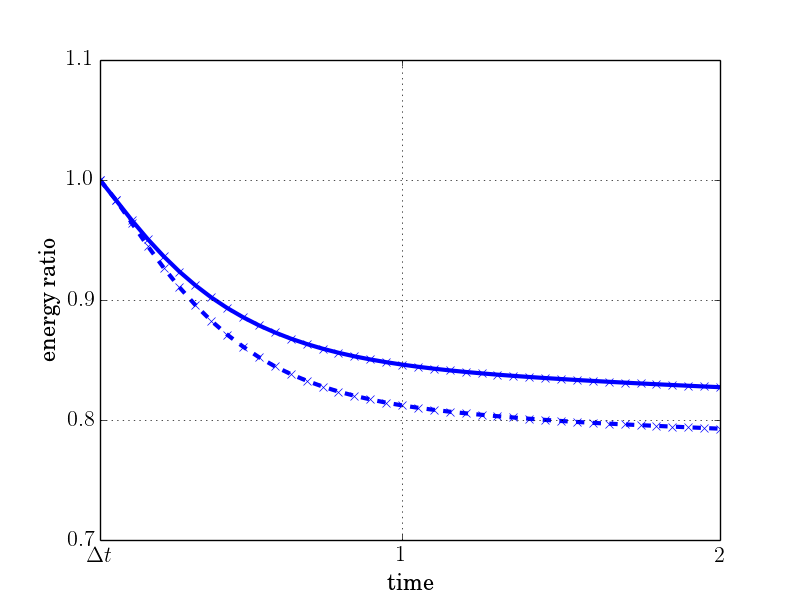}}
 \subcaptionbox{$\Delta t = 5\cdot 10^{-2}$, $h_s = 1/32$.}
   {\includegraphics[width=4cm]{./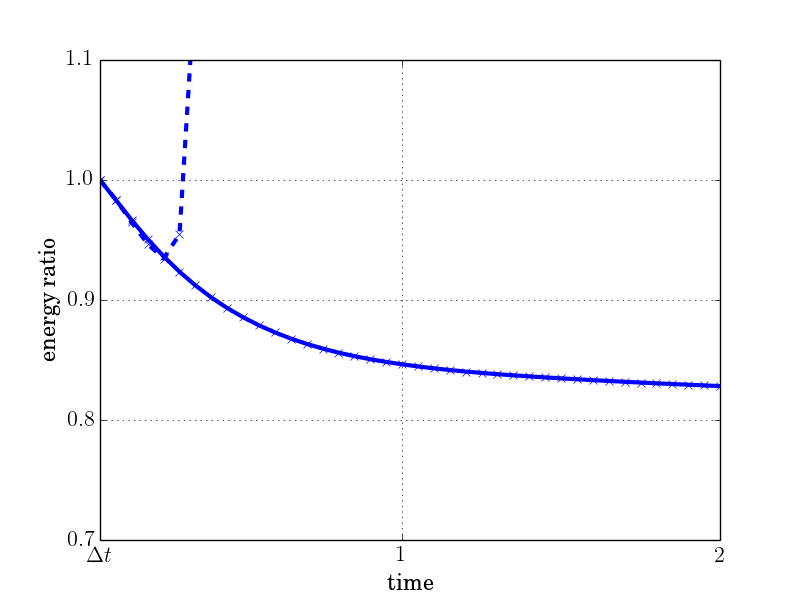}}
 \caption{Energy ratio for codimension one structure. The structure elastic
constant $\kappa = 5$, $h_x = 1/32$,
 the fluid viscosity $\nu = 1$, $\delta\rho = 0.3$. The solid line correspond
to the \dlmibm scheme, while 
 the dashed line marks the energy for the \feibm scheme.}
 \label{fig:thin_energy_rho03}
 \end{figure}

 \begin{figure}
 \centering
 \subcaptionbox{$\Delta t = 10^{-1}$, $h_x = 1/4$.}
   {\includegraphics[width=4cm]{./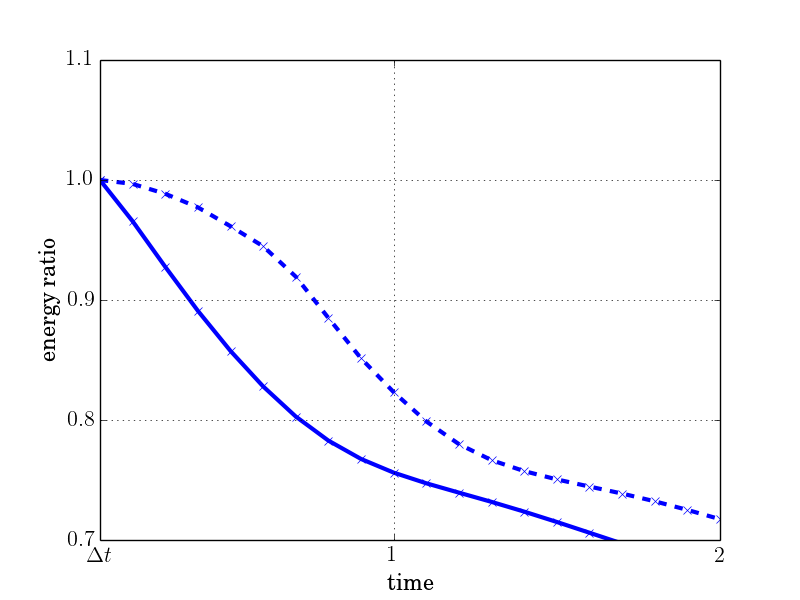}}
 \subcaptionbox{$\Delta t = 10^{-1}$, $h_x = 1/8$.}
   {\includegraphics[width=4cm]{./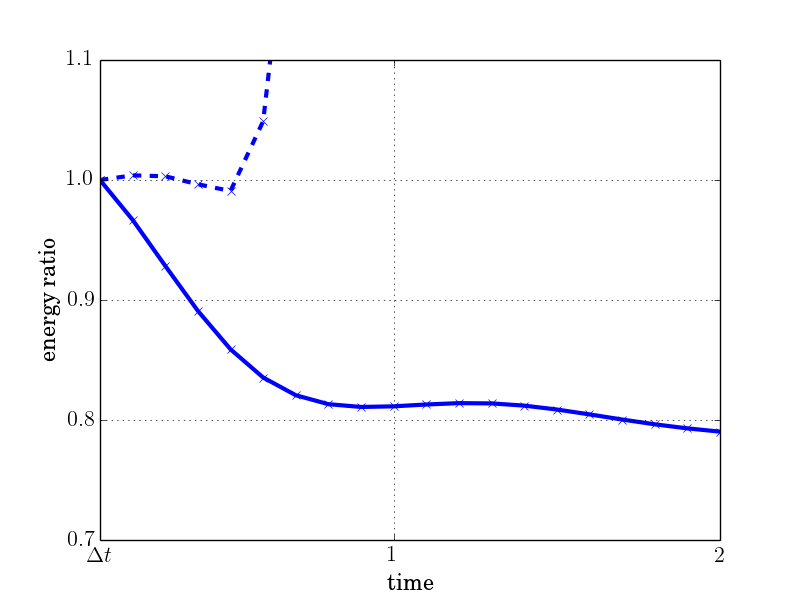}}
 \subcaptionbox{$\Delta t = 10^{-1}$, $h_x = 1/16$.}
   {\includegraphics[width=4cm]{./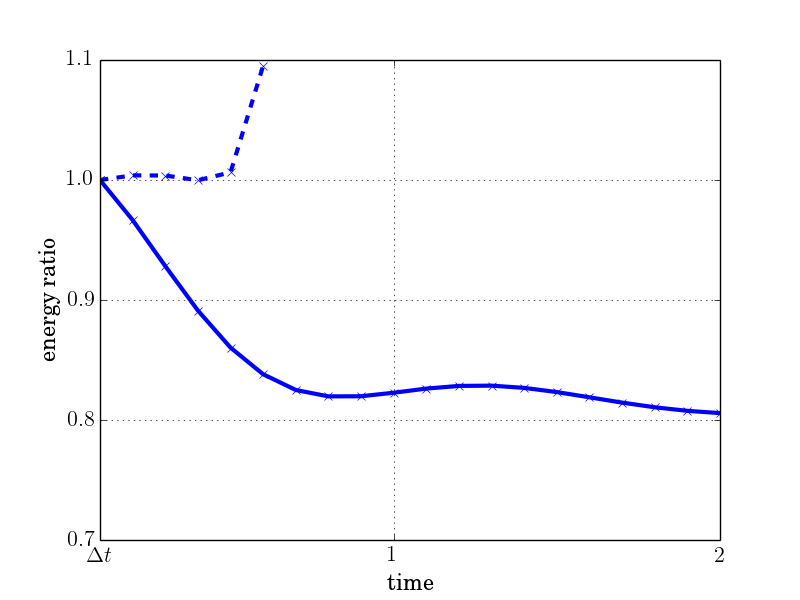}}\\
  \subcaptionbox{$\Delta t = 5\cdot10^{-2}$, $h_x = 1/4$.}
   {\includegraphics[width=4cm]{./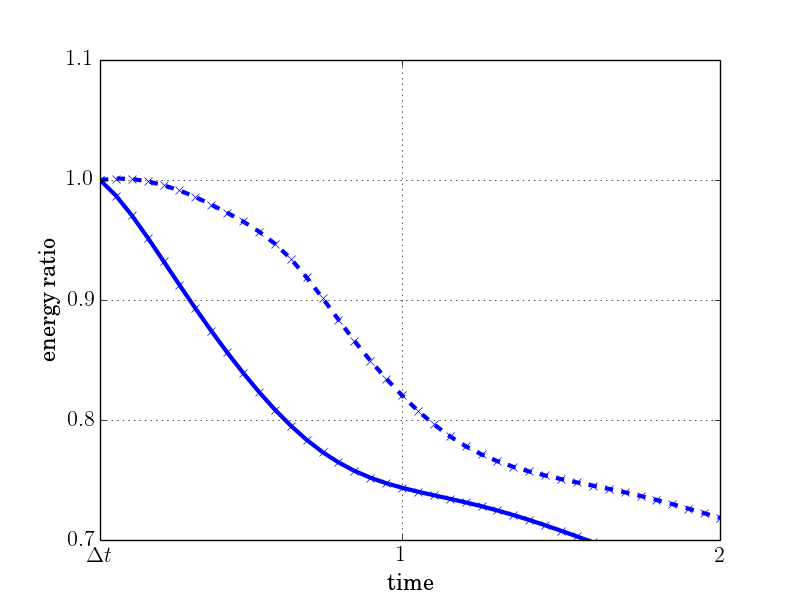}}
 \subcaptionbox{$\Delta t = 5\cdot10^{-2}$, $h_x = 1/8$.}
   {\includegraphics[width=4cm]{./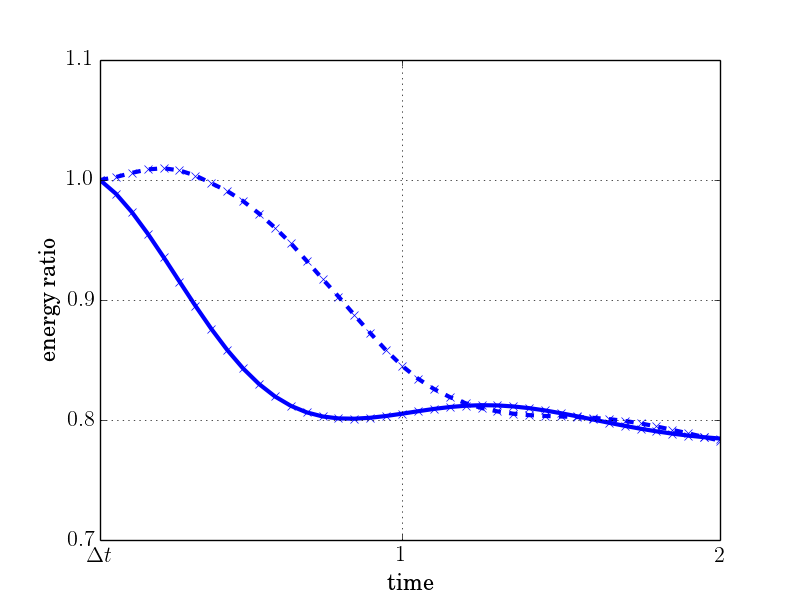}}
 \subcaptionbox{$\Delta t = 5\cdot10^{-2}$, $h_x = 1/16$.}
   {\includegraphics[width=4cm]{./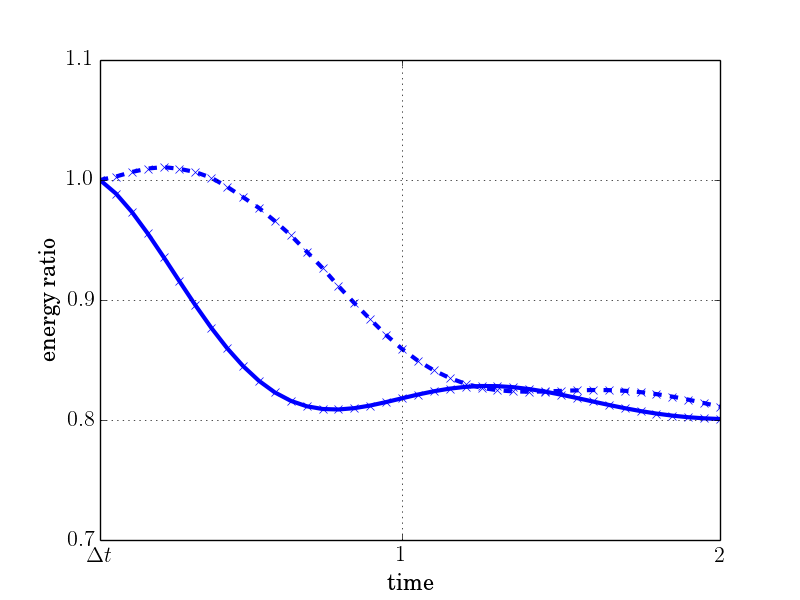}}
 \caption{Energy ratio for codimension zero structure. The structure elastic
constant $\kappa = 1$, $h_s = 1/8$,
 the fluid viscosity $\nu = 0.05$, $\delta\rho = 0.3$. The solid line
correspond to the \dlmibm scheme, while 
 the dashed line marks the energy for the \feibm scheme.}
 \label{fig:thick_energy_rho03}
 \end{figure}

\section*{Acknowledgments}

The numerical experiments presented in Section~\ref{se:num} have been
performed during our collaboration with Luca Heltai and Nicola Cavallini.
Their contribution is greatly acknowledged.


\bibliographystyle{plain}
\bibliography{ref}

\begin{thebibliography}{10}

\bibitem{bbf}
D.~Boffi, F.~Brezzi, and M.~Fortin.
\newblock {\em Mixed Finite Element Methods and Applications}, volume~44 of
  {\em Springer Series in Computational Mathematics}.
\newblock Springer-Verlag, New York, 2013.

\bibitem{coupled2011}
D.~Boffi, N.~Cavallini, F.~Gardini, and L.~Gastaldi.
\newblock Immersed boundary method: Performance analysis of popular finite
  element spaces.
\newblock In M.~Papadrakakis, E.~O{\~{n}}ate, and B.~Schrefler, editors, {\em
  Computational Methods for Coupled Problems in Science and Engineering IV},
  pages 135--146, 2011.

\bibitem{bcgg2012}
D.~Boffi, N.~Cavallini, F.~Gardini, and L.~Gastaldi.
\newblock Local mass conservation of {S}tokes finite elements.
\newblock {\em J. Sci. Comput.}, 52(2):383--400, 2012.

\bibitem{bcggumi}
D.~Boffi, N.~Cavallini, F.~Gardini, and L.~Gastaldi.
\newblock Stabilized {S}tokes elements and local mass conservation.
\newblock {\em Boll. Unione Mat. Ital. (9)}, 5(3):543--573, 2012.

\bibitem{coupled2013}
D.~Boffi, N.~Cavallini, F.~Gardini, and L.~Gastaldi.
\newblock Mass preserving distributed {L}agrange multiplier approach to
  immersed boundary method.
\newblock In S.~Idelsohn, M.~Papadrakakis, and B.~Schrefler, editors, {\em
  Computational Methods for Coupled Problems in Science and Engineering V},
  pages 323--334, 2013.

\bibitem{BCG2011}
D.~Boffi, N.~Cavallini, and L.~Gastaldi.
\newblock Finite element approach to immersed boundary method with different
  fluid and solid densities.
\newblock {\em Math. Models Methods Appl. Sci.}, 21(12):2523--2550, 2011.

\bibitem{nuovodlm}
D.~Boffi, N.~Cavallini, and L.~Gastaldi.
\newblock The finite element immersed boundary method with distributed
  {L}agrange multiplier.
\newblock Submitted, 2014.

\bibitem{BGHP}
D.~Boffi, L.~Gastaldi, L.~Heltai, and C.~S. Peskin.
\newblock On the hyper-elastic formulation of the immersed boundary method.
\newblock {\em Comput. Methods Appl. Mech. Engrg.}, 197(25-28):2210--2231,
  2008.

\bibitem{bathe}
Daniele Boffi and Lucia Gastaldi.
\newblock A finite element approach for the immersed boundary method.
\newblock {\em Comput. \& Structures}, 81(8-11):491--501, 2003.
\newblock In honour of Klaus-J{\"u}rgen Bathe.

\bibitem{bgh1}
Daniele Boffi, Lucia Gastaldi, and Luca Heltai.
\newblock Numerical stability of the finite element immersed boundary method.
\newblock {\em Math. Models Methods Appl. Sci.}, 17(10):1479--1505, 2007.

\bibitem{bgh2}
Daniele Boffi, Lucia Gastaldi, and Luca Heltai.
\newblock On the {CFL} condition for the finite element immersed boundary
  method.
\newblock {\em Comput. \& Structures}, 85(11-14):775--783, 2007.

\bibitem{causin}
P.~Causin, J.~F. Gerbeau, and F.~Nobile.
\newblock Added-mass effect in the design of partitioned algorithms for
  fluid-structure problems.
\newblock {\em Comput. Methods Appl. Mech. Engrg.}, 194(42-44):4506--4527,
  2005.

\bibitem{Fauci198885}
L.J. Fauci and C.S. Peskin.
\newblock A computational model of aquatic animal locomotion.
\newblock {\em Journal of Computational Physics}, 77(1):85--108, 1988.

\bibitem{hoppe1}
Thomas Franke, Ronald H.~W. Hoppe, Christopher Linsenmann, Lothar Schmid,
  Carina Willbold, and Achim Wixforth.
\newblock Numerical simulation of the motion of red blood cells and vesicles in
  microfluidic flows.
\newblock {\em Comput. Vis. Sci.}, 14(4):167--180, 2011.

\bibitem{girglo1995}
V.~Girault and R.~Glowinski.
\newblock Error analysis of a fictitious domain method applied to a {D}irichlet
  problem.
\newblock {\em Japan J. Indust. Appl. Math.}, 12(3):487--514, 1995.

\bibitem{girglopan}
V.~Girault, R.~Glowinski, and T.~W. Pan.
\newblock A fictitious-domain method with distributed multiplier for the
  {S}tokes problem.
\newblock In {\em Applied nonlinear analysis}, pages 159--174. Kluwer/Plenum,
  New York, 1999.

\bibitem{Givelberg20040283}
E.~Givelberg.
\newblock Modeling elastic shells immersed in fluid.
\newblock {\em Communications on Pure and Applied Mathematics},
  57(3):0283--0309, 2004.

\bibitem{Givelberg2003377}
E.~Givelberg and J.~Bunn.
\newblock A comprehensive three-dimensional model of the cochlea.
\newblock {\em Journal of Computational Physics}, 191(2):377--391, 2003.

\bibitem{glokuz2007}
R.~Glowinski and Yu. Kuznetsov.
\newblock Distributed {L}agrange multipliers based on fictitious domain method
  for second order elliptic problems.
\newblock {\em Comput. Methods Appl. Mech. Engrg.}, 196(8):1498--1506, 2007.

\bibitem{Griffith2012433}
B.E. Griffith and S.~Lim.
\newblock Simulating an elastic ring with bend and twist by an adaptive
  generalized immersed boundary method.
\newblock {\em Communications in Computational Physics}, 12(2):433--461, 2012.

\bibitem{heltai}
L.~Heltai.
\newblock On the stability of the finite element immersed boundary method.
\newblock {\em Comput. \& Structures}, 86(7-8):598--617, 2008.

\bibitem{heltaicostanzo}
Luca Heltai and Francesco Costanzo.
\newblock Variational implementation of immersed finite element methods.
\newblock {\em Comput. Methods Appl. Mech. Engrg.}, 229/232:110--127, 2012.

\bibitem{Heys2008977}
J.J. Heys, T.~Gedeon, B.C. Knott, and Y.~Kim.
\newblock Modeling arthropod filiform hair motion using the penalty immersed
  boundary method.
\newblock {\em Journal of Biomechanics}, 41(5):977--984, 2008.

\bibitem{hoppe2}
Ronald H.~W. Hoppe and Christopher Linsenmann.
\newblock The finite element immersed boundary method for the numerical
  simulation of the motion of red blood cells in microfluidic flows.
\newblock In {\em Numerical methods for differential equations, optimization,
  and technological problems}, volume~27 of {\em Comput. Methods Appl. Sci.},
  pages 3--17. Springer, Dordrecht, 2013.

\bibitem{Kim2009927}
Y.~Kim, S.~Lim, S.V. Raman, O.P. Simonetti, and A.~Friedman.
\newblock Blood flow in a compliant vessel by the immersed boundary method.
\newblock {\em Annals of Biomedical Engineering}, 37(5):927--942, 2009.

\bibitem{Kim20062294}
Y.~Kim and C.S. Peskin.
\newblock 2-{D} parachute simulation by the immersed boundary method.
\newblock {\em SIAM Journal on Scientific Computing}, 28(6):2294--2312, 2006.

\bibitem{Leveque1988191}
Randall~J. Leveque, Charles~S. Peskin, and Peter~D. Lax.
\newblock Solution of a two-dimensional cochlea model with fluid viscosity.
\newblock {\em SIAM Journal on Applied Mathematics}, 48(1):191--213, 1988.

\bibitem{LiuKimTang}
W.~K. Liu, D.~W. Kim, and S.~Tang.
\newblock Mathematical foundations of the immersed finite element method.
\newblock {\em Comput. Mech.}, 39(3):211--222, 2007.

\bibitem{Miller2005195}
L.A. Miller and C.S. Peskin.
\newblock A computational fluid dynamics of 'clap and fling' in the smallest
  insects.
\newblock {\em Journal of Experimental Biology}, 208(2):195--212, 2005.

\bibitem{Pe2}
Charles~S. Peskin.
\newblock Numerical analysis of blood flow in the heart.
\newblock {\em J. Computational Phys.}, 25(3):220--252, 1977.

\bibitem{PeAN}
Charles~S. Peskin.
\newblock The immersed boundary method.
\newblock {\em Acta Numer.}, 11:479--517, 2002.

\bibitem{Pe1}
C.S. Peskin.
\newblock Flow patterns around heart valves: A numerical method.
\newblock {\em Journal of Computational Physics}, 10(2):252--271, 1972.

\bibitem{WangLiu}
X.~Wang and W.K. Liu.
\newblock Extended immersed boundary method using {FEM} and {RKPM}.
\newblock {\em Comput. Methods Appl. Mech. Engrg.}, 193:1305--1321, 2004.

\bibitem{ZGWangLiu}
L.~Zhang, A.~Gerstenberger, X.~Wang, and W.K. Liu.
\newblock Immersed finite element method.
\newblock {\em Comput. Methods Appl. Mech. Engrg.}, 193:2051--2067, 2004.

\bibitem{Zhu2002452}
L.~Zhu and C.S. Peskin.
\newblock Simulation of a flapping flexible filament in a flowing soap film by
  the immersed boundary method.
\newblock {\em Journal of Computational Physics}, 179(2):452--468, 2002.

\end{thebibliography}

\end{document}